\documentclass[final,3p,times]{article}

\usepackage{lineno,hyperref}
\modulolinenumbers[1]

\usepackage{graphicx}
\usepackage{amssymb}
\usepackage{mathtools}
\usepackage{amsthm}
 \usepackage{amsmath}
\usepackage{dsfont}
\usepackage{epsfig}
\usepackage{float}
\usepackage{epstopdf}
\usepackage{subfigure}
\usepackage{cite}
\usepackage{xcolor}
\usepackage{latexsym,amssymb}
\usepackage{pgfplots}
\usepackage{amsmath}
\usepackage{authblk}

\usepackage{soul}%% yazının üzerini çizmek için

\numberwithin{equation}{section}

\theoremstyle{definition}

\newtheorem{theorem}{Theorem}[section]
\newtheorem{lemma}[theorem]{Lemma}
\newtheorem{proposition}[theorem]{Proposition}

\newtheorem{definition}[theorem]{Definition}

\theoremstyle{remark}

	\usetikzlibrary{plotmarks}
\usetikzlibrary{external}
\pgfplotsset{compat=1.3}
\newlength\figurewidth % For tikz
\newlength\figureheight
\tikzexternalize
\tikzset{external/force remake=true}

\usepackage[margin=0.88in]{geometry}

\date{}

\begin{document}

 %\phantom{a} \vspace{2cm}

%\pagestyle{headings}

\title{$h$-Trigonometric B-splines}

\author[a]{Fatma Z\"{u}rnac{\i}-Yeti\c{s}}
\author[b]{Ron Goldman}
\author[c]{Plamen Simeonov \footnote{\textbf{Email addresses:} $^a$fzurnaci@itu.edu.tr, $^b$rng@cs.rice.edu  $^c$simeonovp@uhd.edu, }}

\affil[a]{Department of Mathematics Engineering, Istanbul Technical University,  Maslak, Istanbul, 34469, Turkiye}

\affil[b]{Department of Computer Science, Rice University, Houston, Texas, 77251, USA}

\affil[c]{Department of Mathematics and Statistics, University of Houston-Downtown, Houston Texas 77002, USA}

\setcounter{Maxaffil}{0}
\renewcommand\Affilfont{\small}

\maketitle

\begin{abstract}
	We introduce discrete analogues of the exponential, sine, and cosine functions. Then using a discrete trigonometric version of a non-polynomial divided difference, we define discrete analogues of the trigonometric B-splines. We derive a two-term recurrence relation, a two-term formula for the discrete derivative, and two variants of the Marsden identity for these discrete trigonometric B-splines. Since the classical exponential, sine, and cosine functions are limiting cases of their discrete analogues, we conclude that many of the standard results for classical polynomial B-splines extend naturally both to trigonometric B-splines and to discrete trigonometric B-splines.
\end{abstract}

{\bf Keywords}  	Quantum Calculus, Trigonometric Splines, Divided Differences. 
 \vspace{0.3cm}
 
 {\bf MSC2020 Classification:} 41A15, 65D017  

\section{Introduction}
$h$-Trigonometric B-splines are inspired by several works of T. Lyche: in particular, his Ph.D. thesis on \textit{Discrete Polynomial Spline Approximation Methods} \cite{LycheThesis}, and  his paper on \textit{A Stable Recurrence Relation for Trigonometric B-splines} \cite{Lyche1}. Trigonometric splines go back at least to Schoenberg \cite{Schumaker2} and have been studied by many other authors: see the survey paper
\cite{Lyche survey} and the references therein. \textit{The goal of this paper is to extend the theory of B-splines from the classical trigonometric setting to
the discrete trigonometric setting}. Our approach is to invoke the $h$-quantum calculus \cite{victor} in the trigonometric setting. This work is also related to  \cite{ron-h}, where the $h$-quantum calculus is used to extend B\'{e}zier  curves  from the classical polynomial setting to the discrete polynomial setting.

We begin in Section \ref{section2} by adopting from \cite{victor} the definitions for the discrete $h$-exponential function $e^{x}_{h}$ and the discrete $h$-trigonometric functions $\sin_{h}x$ and $\cos_{h}x$. We show that the $h$-trigonometric functions $\sin_{h}x$ and $\cos_{h}x$ share many properties and identities analogous to the properties and identities of the classical trigonometric functions $\sin x$ and $\cos x$. In particular, $$e^{ix}_{h}=\cos_{h}x+i \sin_{h}x.$$
We also derive formulas relating $\sin_{h}x$ and $\cos_{h}x$ to the classical
versions of $\sin x$ and $\cos x$.

The $h$-trigonometric B-splines of degree $m$ are piecewise functions such that each piece belongs to the space. $T_{m,h}$ of all functions of the form:\\
	\begin{equation}
		f(x)= \left\{\begin{array}{ll}	a_{0}+\sum_{k=1}^{n}\left(a_{k} \cos_h(k x) +b_{k} \sin_h(k x) \right), &\mbox{if} \quad m=2 n+1 \quad(n \geqslant 0), \\ \sum_{k=1}^{n}\left( a_{k} \cos_h \left(k-\frac{1}{2}\right) x+b_{k} \sin_h \left(k-\frac{1}{2}\right) x\right), & \mbox{if} \quad m=2 n \quad(n \geqslant 1).
		\end{array}\right.
	\end{equation}
 An equivalent alternative representation for this space is that for any constant $\alpha$,
	\begin{align}
		T_{m,h}=\left\{\sum_{j=0}^{m-1} a_{j}\left(\sin_ h\left(\frac{x-\alpha}{2}\right)\right)^{j}\left(\cos_ h\left(\frac{x-\alpha}{2}\right)\right)^{m-j-1}\right\}.
\end{align}

One of our main tools for studying the $h$-trigonometric B-splines is the $h$-trigonometric version of the divided difference. In Section \ref{section3} we briefly introduce a non-polynomial divided difference, and in Section \ref{section4}  we study in more detail the properties and identities of the corresponding $h$-trigonometric divided difference.

Section \ref{section5} contains our main results on $h$-trigonometric B-splines. We begin by applying the $h$-trigonometric
divided difference to a truncated power of $\sin_{h}x$  to define the $h$-trigonometric B-splines. We derive a two-term recurrence relation for these $h$-trigonometric B-splines, analogous to the de Boor recurrence for classical polynomial B-splines. 
 We also derive a two-term formula for the $h$-derivative of these $h$-trigonometric B-splines in terms of  $h$-trigonometric B-splines of lower order.  In addition, we derive two forms of the Marsden identity for $h$-trigonometric B-splines: an $h$-exponential form and an $h$-trigonometric form. We close by deriving integral representations for the $h$-exponential divided difference and the $h$-trigonometric divided difference.

In Section \ref{section6} we briefly summarize our results. Since the classical exponential, sine, and cosine functions are limiting cases of their discrete analogues, we observe that many of the standard results for classical polynomial B-splines extend naturally both to classical trigonometric B-splines and to $h$-trigonometric B-splines. We close with a few open problems for future research.

\section{The $h$-quantum calculus}\label{section2}
In this section we provide a brief introduction to the $h$-quantum calculus, which we
shall use throughout the remainder of this paper. For further details, formulas, and
proofs, see \cite{victor}.
\subsection{The $h$-exponential and $h$-trigonometric functions}
To begin, recall that 
\begin{equation*}
	e=\lim_{n \rightarrow \infty}\left(1+\frac{1}{n}\right)^{n}=\lim_{h \rightarrow 0}(1+h)^{1/h}.
\end{equation*}
The $h$-quantum calculus the formulas of the classical calculus, but without the limits. Thus we define
\begin{align*}
e_{h}
	=(1+h)^{\frac{1}{h}}\quad \mbox{for}\quad h \neq 0 \quad \mbox{and} \quad h>-1,\\
	e^{x}_{h}
=(1+h)^{x/h}\quad \mbox{for}\quad h \neq 0 \quad \mbox{and} \quad h>-1.
\end{align*}
In particular, if $h = 1$, then $e_h = 2$ and  $e^x_{h}= 2^x$.

Replacing $x$ by $ix$ yields
\begin{align}\label{hdefexp}
	e^{ix}_{h}
	=&(1+h)^{ix/h}.
\end{align}
In analogy with $\sin x$ and $\cos x$, we define $\sin_{h}x$ and $\cos_{h}x$  by 
\begin{align}\label{cosseri}
	\cos_{h}x=\frac{e^{ix}_{h}+e^{-ix}_{h}}{2} \quad\mbox{and}\quad\sin_{h}x=\frac{e^{ix}_{h}-e^{-ix}_{h}}{2i}.
\end{align}
From these definitions, it follows immediately that
\begin{align*}
	\lim_{h\rightarrow 0} 	e^{x}_{h}=	e^{x},\quad \lim_{h \rightarrow 0}\cos_{h}x=\cos x, \quad \lim_{h \rightarrow 0}\sin_{h}x=\sin x.
\end{align*}
Thus, formulas and identities for the classical exponential, sine, and cosine functions are limiting cases of the corresponding formulas and identities for the discrete analogues of the exponential, sine, and cosine functions. Moreover, again from the definitions
\begin{align}\label{cossin1}
		&e^{ix}_{h}=(1+h)^{ix/h}=\cos_{h}x+i\sin_{h}x,\\\label{cossin2}
&e^{-ix}_{h} =(1+h)^{-ix/h}=\cos_{h}x-i\sin_{h}x.
\end{align}
	%%%%
	Since
\begin{align}\label{exppower}
	e^{i(x\pm y)}_{h}=e^{ix}_{h}e^{\pm i y}_{h},
\end{align} many of the classical identities for sine and cosine remain valid for the $h$-variants of sine
and cosine. In particular,
\begin{align}\nonumber
	&\cos_{h}(x\pm y)=\cos_{h}x\cos_{h}y\mp \sin_{h}x\sin_{h}y,\\ \nonumber
	&\sin_{h}(x\pm y)=\sin_{h}x\cos_{h}y\pm \cos_{h}x\sin_{h}y,\\ \nonumber
		&\sin_{h}^{2}x+\cos_{h}^{2}x=1,\\ \nonumber
	&\cos_{h}^{2}(2x)= \cos_{h}^{2}x-\sin_{h}^{2}x=1-2\sin_{h}^{2}x=2\cos_{h}^{2}x-1,\\ \nonumber
	&\sin_{h}2x=2\sin_{h}x\cos_{h}x,\\  \nonumber
		&\sin_{h}x\cos_{h}y=\frac{1}{2}\{\sin_{h}(x+y)+\sin_{h}(x-y)\},\\ \nonumber
	&\sin_{h}x\sin_{h}y=\frac{1}{2}\{\cos_{h}(x-y)-\cos_{h}(x+y)\},\\ \nonumber
		&\cos_{h}x\cos_{h}y=\frac{1}{2}\{\cos_{h}(x+y)+\cos_{h}(x-y)\}.
\end{align}
\begin{lemma}
	\begin{align}\label{identitysinexp}
		e^{ix}_{h}-e^{iy}_{h}=2i e^{i\left(x+y\right)/2}_{h}\sin_{h}\left(\left(x-y\right)/2\right),\\
		\label{identitycosexp}
		e^{ix}_{h}+e^{iy}_{h}=2 e^{i\left(x+y\right)/2}_{h}\cos_{h}\left(\left(x-y\right)/2\right).
	\end{align}
\end{lemma}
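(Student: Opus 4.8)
The plan is to derive both identities from the multiplicative law \eqref{exppower} together with the definitions \eqref{cosseri} of $\sin_{h}$ and $\cos_{h}$. The idea is to write the two exponents symmetrically about their midpoint. Set $u=(x+y)/2$ and $v=(x-y)/2$, so that $x=u+v$ and $y=u-v$. Applying \eqref{exppower} with $x$ replaced by $u$ and $y$ replaced by $v$ gives $e^{ix}_{h}=e^{i(u+v)}_{h}=e^{iu}_{h}e^{iv}_{h}$ and, similarly, $e^{iy}_{h}=e^{i(u-v)}_{h}=e^{iu}_{h}e^{-iv}_{h}$. The only point that needs a word of justification here is that the exponent law $(1+h)^{a}(1+h)^{b}=(1+h)^{a+b}$ is legitimate for the purely imaginary exponents $a=iv/h$ and $b=\pm iu/h$; this is immediate since $1+h>0$, so $(1+h)^{z}:=e^{z\ln(1+h)}$ is entire and the additive law is inherited from the classical exponential.

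Next I would simply factor. Subtracting the two expressions, $e^{ix}_{h}-e^{iy}_{h}=e^{iu}_{h}\left(e^{iv}_{h}-e^{-iv}_{h}\right)$, and by the definition of $\sin_{h}$ in \eqref{cosseri} the parenthesised factor equals $2i\sin_{h}v$. Substituting back $u=(x+y)/2$ and $v=(x-y)/2$ yields \eqref{identitysinexp}. Adding instead, $e^{ix}_{h}+e^{iy}_{h}=e^{iu}_{h}\left(e^{iv}_{h}+e^{-iv}_{h}\right)=2e^{iu}_{h}\cos_{h}v$ by the definition of $\cos_{h}$ in \eqref{cosseri}, which is exactly \eqref{identitycosexp}.

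There is essentially no obstacle in this argument: it is a one-line symmetrisation of the exponents, an application of \eqref{exppower}, and an appeal to the defining formulas for $\sin_{h}$ and $\cos_{h}$. The mildest technical point is the validity of the complex exponent law invoked in the first step, and even that reduces to the positivity of $1+h$. These identities are the $h$-analogues of the classical sum-to-product formulas and will be used repeatedly later, so it is worth recording them cleanly at this stage.
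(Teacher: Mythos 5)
Your argument is correct and is essentially the paper's own proof: both factor out $e^{i(x+y)/2}_{h}$ using \eqref{exppower} and then recognize $e^{i(x-y)/2}_{h}\mp e^{-i(x-y)/2}_{h}$ via the definitions \eqref{cosseri}. Your extra remark justifying the additive exponent law for imaginary exponents is a harmless (and reasonable) addition, but the substance is identical.
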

\begin{proof}
By    (\ref{exppower}) and (\ref{cosseri}),
\begin{align}
	e^{ix}_{h}-e^{iy}_{h}=e^{i\left(x+y\right)/2}_{h}\left(e^{i\left(x-y\right)/2}_{h}-e^{i\left(y-x\right)/2}_{h}\right)=2i e^{i\left(x+y\right)/2}_{h}\sin_{h}\left(\left(x-y\right)/2\right),
\end{align}
which is (\ref{identitysinexp}). The proof of (\ref{identitycosexp}) is very similar.
\end{proof}
Finally, there is a relationship between the classical sine and cosine and the $h$-variants of sine and cosine. Observe that
\begin{align*}
	e^{ix}_{h}=(1+h)^{ix/h}=e^{ix(\ln(1+h))/h}.
\end{align*}
It follows that 
\begin{align*}
	\cos_{h}x+i\sin_{h}x=\cos\left(\frac{\ln(1+h)}{h}x\right)+i\sin\left(\frac{\ln(1+h)}{h}x\right).
\end{align*}
Therefore
\begin{align*}
	\cos_{h}x=\cos\left(\frac{\ln(1+h)}{h}x\right) \quad \mbox{and}\quad \sin_{h}x=\sin\left(\frac{\ln(1+h)}{h}x\right).
	\end{align*}
Notice too that $\sin_{h}x=0$ if $x=n \pi h/\ln(1+h)$, $ n \in \mathbb{Z}$. Similarly, $\cos_{h}x=0$ if $x=(n+1/2)\pi h/\ln(1+h)$, $ n \in \mathbb{Z}$.
\subsection{The $h$-derivative and the $h$-integral}
In the $h$-quantum calculus, the classical derivative is replaced by the $h$-derivative \begin{align*}
	\Delta_{h}f(x)=\frac{f(x+h)-f(x)}{h},
\end{align*}
the formula for the derivative without the limit. The $h$-derivative satisfies an $h$-version
	of the product rule.\vspace{0.15cm}
	\textbf{$h$-Product rule:}
\begin{align}\label{derivpr}
	\Delta_{h}\left(f(x)g(x)\right)=g(x)\Delta_{h}f(x)+f(x+h)\Delta_{h}g(x).
\end{align}
Notice, however, that the chain rule does not hold for the $h$-derivative:
\begin{align*}
	\Delta_{h}e^{x}_{h}=\frac{(1+h)^{(x+h)/h}-(1+h)^{x/h}}{h}=e^{x}_{h},
\end{align*}
\begin{align}\label{1derivative_e2}
	\Delta_{h}e^{icx}_{h}=\frac{e^{ic(x+h)}_{h}-e^{icx}_{h}}{h}=\left(\frac{e^{ich}_{h}-1}{h}\right)e^{icx}_{h}.
\end{align}
%%%%%%%%%%%%5

The $h$-integral is an $h$-analogue of the classical integral, a
	Riemann sum without a limit. 
\begin{definition}[\textbf{$h$-Integral}]
	If \( b - a \in h\mathbb{Z} \), then  
	\[
	\int_a^b f(x)\, d_h x =
	\begin{cases}
		h \left( f(a) + f(a + h) + \cdots + f(b - h) \right) & \text{if } a < b, \\
		0 & \text{if } a = b, \\
		-h \left( f(b) + f(b + h) + \cdots + f(a - h) \right) & \text{if } a > b.
	\end{cases}
	\]	
\end{definition}
The $h$-integral satisfies an $h$-version of the
fundamental theorem of calculus as well as an $h$-version of integration by
parts.\\
\textbf{Fundamental theorem of \( h \)-calculus:}
	If  \( b - a \in h\mathbb{Z} \), then
	\begin{align}
			\int_a^b \Delta_{h}f(x)\, d_h x = f(b) - f(a).
	\end{align} 
\textbf{$h$-Integration by parts:}
	\begin{align}\label{hintpart}
		\int_a^b f(x) \Delta_{h}g(x)\, d_hx  = f(b)g(b) - f(a)g(a) - \int_a^b g(x+h)\Delta_{h}f(x)\, d_hx.
	\end{align} 

\section{Generalized divided differences}\label{section3}
B-splines from non-polynomial spline spaces, such as trigonometric, hyperbolic, or Chebychevian 
spaces have been studied by many authors, see e.g. the papers of Schumaker \cite{Schumaker3,schumaker1}, Lyche and Winther \cite{Lyche1}. The approach to these various types of B-splines is very similar in all cases, that is, the use of a suitable divided-difference operator, applied to an appropriate generalization of a truncated-power-function.

For functions $\varphi_{0}, \varphi_{1},\ldots,\varphi_{m} $ and  numbers $x_{0}, x_{1}, \ldots, x_{m}$, let
\begin{equation}
	\det\begin{pmatrix}
		\varphi_{0} & \varphi_{1}  &\ldots  & \varphi_{m} \\
		x_{0} & x_{1} & \ldots & x_{m} 
	\end{pmatrix}= \begin{vmatrix}
		\varphi_{0}(x_{0}) & \varphi_{1}(x_{0})  &\ldots  & \varphi_{m}(x_{0}) \\ 
		\varphi_{0}(x_{1}) & \varphi_{1}(x_{1})  &\ldots  & \varphi_{m}(x_{1}) \\
		\vdots & \vdots & & \vdots \\ 
		\varphi_{0}(x_{m}) & \varphi_{1}(x_{m})  &\ldots  & \varphi_{m}(x_{m}) 
	\end{vmatrix}.
\end{equation}
Recently,  Z\"{u}rnac{\i} and Di\c{s}ib\"{u}y\"{u}k \cite{fatma} defined non-polynomial B-spline functions  for the space 
	\begin{equation}
	\pi_{n}(\gamma_{1},\gamma_{2})=\text{span}\big\{\gamma_{1}^{n-k}\gamma_{2}^{k}\big\}_{k=0}^{n} ,
\end{equation}
where the functions $ \gamma_{1} $ and $ \gamma_{2} $ are linearly independent and 
\begin{align}\label{dfunc}
	d(x_{1},x_{2})=\gamma_{1}(x_{1})\gamma_{2}(x_{2})-\gamma_{1}(x_{2})\gamma_{2}(x_{1})\neq 0 \quad \mbox{for}\quad x_{1},x_{2} \in [a,b], \, x_{1} \neq x_{2}.
\end{align}
These non-polynomial B-splines are constructed by using  non-polynomial divided differences applied to a proper generalization of the truncated-power function. These non-polynomial divided differences are defined recursively as a generalization of the classical divided differences.
\begin{definition}
	Given $m+1$ distinct nodes $x_{0},x_{1},\ldots,x_{m}$, the non-polynomial divided difference of order $ m $ is defined recursively by setting
	\begin{align}
		&[x_{k}]_{\gamma_{1}, \gamma_{2}}f=f(x_{k}) \quad \mbox{for}\quad k=0, 1,\ldots, m,\\
		\label{divdif}
		&[x_{0},\ldots,x_{m}]_{\gamma_{1}, \gamma_{2}}f=\frac{[x_{1},\ldots,x_{m}]_{\gamma_{1}, \gamma_{2}}f - [x_{0},\ldots,x_{m-1}]_{\gamma_{1}, \gamma_{2}}f}{d(x_{0},x_{m})}.
	\end{align}
\end{definition}
These non-polynomial divided differences satisfy numerous identities and properties similar to the identities and properties of the classical polynomial divided
difference. In particular  \cite{fatma,fatma2,fatma3}:\\\\
\textit{Representation as the ratio of two determinants:}	If $1\in \text{span}\big\{\gamma_{1}, \gamma_{2}\big\}$, then\\
\begin{align*}
	[x_{0},\ldots,x_{m}]_{\gamma_{1}, \gamma_{2}}f=c_{1}^{m} \frac{	\det\begin{pmatrix}
			1   &  \gamma_{2} &\ldots  & \gamma_{2}^{m-1} &f\\
			x_{0} & x_{1} & \ldots & x_{m-1} & x_{m} 
	\end{pmatrix}}{	\det\begin{pmatrix}
			1   &  \gamma_{2}   &\ldots  &  \gamma_{2}^{m}\\
			x_{0} & x_{1} & \ldots   & x_{m} 
	\end{pmatrix}},
\end{align*}
where $c_{1}\gamma_{1}(x)+c_{2}\gamma_{2}(x)=1$ and $c_1\neq0$. \\\\
\textit{Leibniz Rule:}
\begin{equation}\label{NPleibniz}
	[x_{0},\ldots,x_{m}]_{\gamma_{1}, \gamma_{2}}(fg)=\sum_{k=0}^{m}[x_{0},\ldots,x_{k}]_{\gamma_{1}, \gamma_{2}}f \,  [x_{k},\ldots,x_{m}]_{\gamma_{1}, \gamma_{2}}g.
\end{equation}
\textit{Lagrange coefficient:} If $1\in \text{span}\big\{\gamma_{1}, \gamma_{2}\big\}$, then 
\begin{equation}\label{NPlagrange}
	[x_{0},\ldots,x_{m}]_{\gamma_{1}, \gamma_{2}}f=\sum_{k=0}^{m}\frac{f(x_{k})}{\prod \limits_{j\neq k} d(x_{j},x_{k})}.
\end{equation}
\textit{Symmetry:} $[x_{0},\ldots,x_{m}]_{\gamma_{1}, \gamma_{2}}f$
is a symmetric function  of the nodes  $x_{0},x_{1},\ldots,x_{m}$.

\section{$h$-Trigonometric divided differences}\label{section4}
For any integer $m \geq 1$, set
\begin{align}\label{space1}
S_m=&\, \mbox{span}\left\{e^{-i\left(\frac{m-1}{2}\right)x}_{h}, e^{-i\left(\frac{m-1}{2}-1\right)x}_{h}, \ldots, e^{i\left(\frac{m-1}{2}\right)x}_{h}\right\},\\ \label{expspace2}
	\tilde{S}_m=&\, \mbox{span}\left\{1, e^{ix}_{h}, \ldots, e^{i(m-1)x}_{h}\right\}.
\end{align}
Let $U_m$ and $\tilde{U}_m$ be the multiplication operators defined by
\begin{align}\label{operator11}
	\left(U_mf\right)(x)=e^{i\left(\frac{m-1}{2}\right)x}_{h}f(x),\\\label{operator2}
	\left(\tilde{U}_mf\right)(x)=e^{-i\left(\frac{m-1}{2}\right)x}_{h}f(x).
\end{align}
Then $U_m(S_m)=\tilde{S}_m$ and $\tilde{U}_m(\tilde{S}_m)=S_m$.

Let $x_{0}< x_{1}< \cdots < x_{m}$ be such that $x_{m}-x_{0}< \frac{2\pi h}{\ln(1+h)}$. For $f \in C([x_0, x_m])$,  we define the $h$-trigonometric divided differences by setting
\begin{align}\label{htrigodd}
	[x_0,\ldots, x_m;h]_{t}f= 2^{m-1}\frac{	\det\begin{pmatrix}
			1   & \cos_{h}x  &\sin_{h}x &\ldots  & \cos_{h}nx  &\sin_{h}nx & f(x)\\
			x_{0} & x_{1}  & x_{2}& \ldots & x_{m-2}& x_{m-1} & x_{m} 
	\end{pmatrix}}{	\det\begin{pmatrix}
			\cos_{h}\frac{x }{2}  &  \sin_{h}\frac{x }{2}  &\ldots  & \cos_{h}\left(n+\frac{1}{2} \right)x &  \sin_{h}\left(n+\frac{1}{2} \right)x  \\
			x_{0} & x_{1} & \ldots  & x_{m-1} & x_{m} 
	\end{pmatrix}}
\end{align}
for $m=2n+1$, and 
\begin{align}\label{htrigeven}
	[x_0,\ldots, x_m;h]_{t}f=  2^{m}\frac{	\det\begin{pmatrix}
			\cos_{h}\frac{x }{2}  &  \sin_{h}\frac{x }{2}  &\ldots  & \cos_{h}\left(n-\frac{1}{2} \right)x &  \sin_{h}\left(n-\frac{1}{2} \right)x &f(x) \\
			x_{0} & x_{1} & \ldots & x_{m-2} & x_{m-1} & x_{m} 
	\end{pmatrix}}{	\det\begin{pmatrix}
			1   & \cos_{h}x  &\sin_{h}x &\ldots  & \cos_{h}nx  &\sin_{h}nx\\
			x_{0} & x_{1}  & x_{2}& \ldots& x_{m-1} & x_{m} 
	\end{pmatrix}}
\end{align}
for $m=2n$.

In order to study  $h$-trigonometric divided differences, we also introduce an exponential version of the divided difference. 	The space $\tilde{S}_m$ is the special case of the space 	$\pi_{n}(\gamma_{1},\gamma_{2})$ for  $\gamma_{1}(x)=1$ and $\gamma_{2}(x)= e^{ix}_{h}$.  We define the $h$-exponential divided difference by setting
	\begin{equation}\label{GDDrecurrenceh}
	[ x_{0}, \ldots, x_{m} ;h ]_{e} f=	[ x_{0}, \ldots, x_{m} ]_{\gamma_{1},\gamma_{2}} f \quad \mbox{where}\quad \gamma_{1}(x)=1 \quad \mbox{and}\quad \gamma_{2}(x)= e^{ix}_{h}.
\end{equation}
The versions of the properties from Section \ref{section3} for  $h$-exponential divided differences are given below. \\\\
\textit{Representation as the ratio of two determinants:}
\begin{equation}\label{hDDdefinition}
	[ x_{0}, \ldots, x_{m} ;h ]_{e} f=\frac{	\det\begin{pmatrix}
			1   & e^{ix}_{h}  &\ldots  & e^{i(m-1)x}_{h}&f\\
			x_{0} & x_{1} & \ldots & x_{m-1} & x_{m} 
	\end{pmatrix}}{	\det\begin{pmatrix}
			1   & e^{ix}_{h}  &\ldots  & e^{imx}_{h}\\
			x_{0} & x_{1} & \ldots   & x_{m} 
	\end{pmatrix}}.
\end{equation}
Notice that
\begin{equation*}
	\det\begin{pmatrix}
		1   & e^{ix}_{h}  &\ldots  & e^{imx}_{h}\\
		x_{0} & x_{1} & \ldots   & x_{m} 
	\end{pmatrix}= \begin{vmatrix}
		1 & e^{ix_{0}}_{h}
		&\ldots  & e^{imx_{0}}_{h}  \\ 
		1& e^{ix_{1}}_{h} &\ldots  & e^{imx_{1}}_{h} \\
		\vdots & \vdots & & \vdots \\ 
		1 & e^{ix_{m}}_{h} &\ldots  & e^{imx_{m}}_{h} 
	\end{vmatrix}.
\end{equation*}
This determinant is an analogue of the Vandermonde determinant since $e^{ikx}_{h}=(e^{ix}_{h})^{k}$. Therefore
\begin{equation}\label{vandermonde1}
	\det\begin{pmatrix}
		1   & e^{ix}_{h}  &\ldots  & e^{imx}_{h}\\
		x_{0} & x_{1} & \ldots   & x_{m} 
	\end{pmatrix}= \prod_{0 \leq j < k \leq m}(e^{ix_{k}}_{h}-e^{ix_{j}}_{h}).
\end{equation}
By (\ref{identitysinexp}), 
\begin{equation}\label{vandermonde}
	\det\begin{pmatrix}
		1   & e^{ix}_{h}  &\ldots  & e^{imx}_{h}\\
		x_{0} & x_{1} & \ldots   & x_{m} 
	\end{pmatrix}=  (2i)^{m(m+1)/2}\prod_{0 \leq j < k \leq m} e^{i(x_{k}+x_{j})/2}_{h}\sin_{h}\left(\frac{x_{k}-x_{j}}{2}\right).
\end{equation}
Since the zeros of $\sin_{h}x$ are located at $x=\frac{n\pi h}{\ln(1+h)}$, $n \in \mathbb{Z}$, this determinant is different from zero for  distinct nodes and for $x_{m}-x_{0}< \frac{2\pi h}{\ln(1+h)}$.  \\\\
\textit{Recursion:}\\
\begin{align}\label{recursion}
	[ x_{0}, \ldots, x_{m} ;h ]_{e} f=\frac{	[ x_{1}, \ldots, x_{m} ;h ]_{e} f-	[ x_{0}, \ldots, x_{m-1} ;h ]_{e} f}{e^{ix_{m}}_{h}-e^{ix_{0}}_{h}	}.
\end{align}
\textit{Leibniz Rule:}
\begin{equation}\label{Leibniz}
	[ x_{0}, \ldots, x_{m} ;h ]_{e} \left(fg\right)=\sum_{k=0}^{m}[ x_{0}, \ldots, x_{k} ;h ]_{e} f[ x_{k}, \ldots, x_{m} ;h ]_{e} g.
\end{equation}	
\textit{Lagrange coefficient:}  \begin{equation}\label{GDDlagrange}
	[ x_{0}, \ldots, x_{m} ;h ]_{e} f=\sum_{j=0}^{m}\frac{f(x_{j})}{\prod \limits_{k \neq j}\left(e^{ix_{j}}_{h}-e^{ix_{k}}_{h}\right)}.
\end{equation}	\\
\textit{Symmetry:} $[x_{0},\ldots,x_{m}]_{e}f$
is a symmetric function  of the nodes  $x_{0},x_{1},\ldots,x_{m}$. \\\\
Notice that if $f\in \tilde{S}_{m}$, then $f(x)=\sum_{k=0}^{m-1}a_{k}e^{ikx}_{h}$, and (\ref{hDDdefinition})  yields $[ x_{0}, \ldots, x_{m} ;h ]_{e} f=0.$
Moreover,  given the values of a function $f$ at $m+1$ distinct points $x_0, x_1, \ldots,x_m$, there exists a unique function $P(x)=\sum_{k=0}^{m}a_{k}e^{ikx}_{h}\in \tilde{S}_{m+1}$ such that $P(x_j)=f(x_j)$, $j=0,1,\ldots,m$, since the determinant of the collocation matrix is non-zero. Moreover 
\begin{equation}\label{dcoeficient}
	[ x_{0}, \ldots, x_{m} ;h ]_{e} f=a_m.
\end{equation}
 Indeed since  $P(x_j)=f(x_j)$, it follows from (\ref{hDDdefinition}) that
\begin{equation*}
	[ x_{0}, \ldots, x_{m} ;h ]_{e} f=\frac{	\det\begin{pmatrix}
			1   & e^{ix}_{h}  &\ldots  & e^{i(m-1)x}_{h}& P\\
			x_{0} & x_{1} & \ldots & x_{m-1} & x_{m} 
	\end{pmatrix}}{	\det\begin{pmatrix}
			1   & e^{ix}_{h}  &\ldots  & e^{imx}_{h}\\
			x_{0} & x_{1} & \ldots   & x_{m} 
	\end{pmatrix}}=\frac{	\det\begin{pmatrix}
	1   & e^{ix}_{h}  &\ldots  & e^{i(m-1)x}_{h}&  a_{m}e^{imx}_{h} \\
	x_{0} & x_{1} & \ldots & x_{m-1} & x_{m} 
\end{pmatrix}}{	\det\begin{pmatrix}
1   & e^{ix}_{h}  &\ldots  & e^{imx}_{h}\\
x_{0} & x_{1} & \ldots   & x_{m} 
\end{pmatrix}}= a_{m}.
\end{equation*}

The following lemma describes the relation between the $h$-trigonometric and the 
$h$-exponential divided differences.
\begin{lemma}\label{lemmarelation}
	Suppose that $x_{0}< x_{1} < \cdots < x_{m} < x_0+ \frac{2\pi h}{\ln(1+h)}$. Then
	\begin{align}\label{relation_te}
		[x_0,\ldots, x_m;h]_{t}f=c_{0,m}[ x_{0}, \ldots, x_{m} ;h ]_{e} \left(U_mf\right),
	\end{align}
	where 
	\begin{align}\label{coefficient}
		c_{0,m}=(2i)^{m}e^{\frac{i}{2}\sum_{k=0}^{m}x_{k}}_{h}.
	\end{align}
\end{lemma}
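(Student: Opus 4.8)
The plan is to prove Lemma \ref{lemmarelation} by comparing the determinant representations of the two divided differences. First I would treat the odd case $m=2n+1$, where the $h$-trigonometric divided difference is given by (\ref{htrigodd}) as a ratio of an $(m+1)\times(m+1)$ determinant whose columns are $1,\cos_h x,\sin_h x,\ldots,\cos_h nx,\sin_h nx,f(x)$ over an $(m+1)\times(m+1)$ determinant whose columns are $\cos_h\frac{x}{2},\sin_h\frac{x}{2},\ldots,\cos_h(n+\frac12)x,\sin_h(n+\frac12)x$. The key observation is that applying the multiplication operator $U_m$, i.e.\ multiplying each row $k$ by $e^{i\frac{m-1}{2}x_k}_h = e^{inx_k}_h$, converts the basis $\{1,\cos_h x,\sin_h x,\ldots,\cos_h nx,\sin_h nx\}$ into a basis of $\tilde S_m=\mathrm{span}\{1,e^{ix}_h,\ldots,e^{i(m-1)x}_h\}=\mathrm{span}\{1,e^{ix}_h,\ldots,e^{2inx}_h\}$ via the relations $\cos_h kx=\frac{e^{ikx}_h+e^{-ikx}_h}{2}$ and $\sin_h kx=\frac{e^{ikx}_h-e^{-ikx}_h}{2i}$ from (\ref{cosseri}). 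Concretely, $e^{inx}\cos_h kx$ and $e^{inx}\sin_h kx$ are linear combinations of $e^{i(n+k)x}_h$ and $e^{i(n-k)x}_h$, and as $k$ ranges over $1,\ldots,n$ together with the constant $1$, the exponents $n\pm k$ and $n$ cover $0,1,\ldots,2n=m-1$ exactly. So by elementary column operations the first-column-block of the $U_m$-transformed trigonometric numerator determinant becomes the $\tilde S_m$ Vandermonde-type block appearing in (\ref{hDDdefinition}), at the cost of a constant Jacobian factor (a determinant of a fixed $2n+1$-by-$2n+1$ change-of-basis matrix with entries $\tfrac12,\tfrac{1}{2i}$, independent of the nodes).

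Next I would carry out the analogous manipulation on the denominator of (\ref{htrigodd}): multiplying row $k$ by $e^{i(n+\frac12)x_k}_h$ sends $\{\cos_h\frac{x}{2},\sin_h\frac{x}{2},\ldots,\cos_h(n+\frac12)x,\sin_h(n+\frac12)x\}$ into $\mathrm{span}\{1,e^{ix}_h,\ldots,e^{i(2n+1)x}_h\}=\tilde S_{m+1}$, again via a fixed change-of-basis matrix, turning that determinant into the $h$-Vandermonde determinant $\det\begin{pmatrix}1&e^{ix}_h&\cdots&e^{imx}_h\\ x_0&\cdots&x_m\end{pmatrix}$ appearing in the denominator of (\ref{hDDdefinition}). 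The total row-scaling factor pulled out of numerator and denominator is then $\prod_k e^{inx_k}_h$ from the numerator and $\prod_k e^{i(n+\frac12)x_k}_h$ from the denominator; their ratio contributes $\prod_k e^{-\frac{i}{2}x_k}_h=e^{-\frac{i}{2}\sum_k x_k}_h$ to $[x_0,\ldots,x_m;h]_e(U_mf)$ relative to $[x_0,\ldots,x_m;h]_tf$, so it appears with the opposite sign in $c_{0,m}$ — I need to track this carefully. Combining the two constant change-of-basis determinants and the power-of-$2$ normalizations $2^{m-1}$ in (\ref{htrigodd}) versus the implicit normalization in (\ref{hDDdefinition}), I expect the constants to collapse to exactly $(2i)^m e^{\frac{i}{2}\sum_k x_k}_h$. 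A clean way to pin the constant down without computing the change-of-basis determinant explicitly is to test (\ref{relation_te}) on a single convenient function: take $f(x)=e^{-inx}_h e^{imx}_h = e^{inx}_h$ — wait, better to take $f$ so that $U_mf(x)=e^{imx}_h$, i.e.\ $f(x)=e^{i(m-n)x}_h=e^{i(n+1)x}_h$; then the right side is $c_{0,m}$ by (\ref{dcoeficient}), and the left side can be evaluated directly from (\ref{htrigodd}) (or, better, from the Lagrange-coefficient form) using the $h$-Vandermonde evaluation (\ref{vandermonde}), which immediately produces the factor $(2i)^m e^{\frac{i}{2}\sum x_k}_h$ after cancellation. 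This calibration trick is both the cleanest route and avoids ever computing the fixed $O(m)\times O(m)$ Jacobian determinant.

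The even case $m=2n$ is entirely parallel: in (\ref{htrigeven}) the numerator already has the half-integer basis (scaled by $e^{i(n-\frac12)x_k}_h$ this lands in $\tilde S_m=\mathrm{span}\{1,\ldots,e^{i(2n-1)x}_h\}$ after adjoining $f$) and the denominator has the integer basis $\{1,\cos_h x,\ldots,\cos_h nx,\sin_h nx\}$ (scaled by $e^{inx_k}_h$ this lands in $\tilde S_{m+1}$), and the same calibration argument on $f$ with $U_mf=e^{imx}_h$ fixes the constant; I should double-check that the normalization jumps from $2^{m-1}$ (odd) to $2^m$ (even) precisely compensates for the parity difference in the count of $\cos_h/\sin_h$ pairs so that $c_{0,m}=(2i)^m e^{\frac{i}{2}\sum x_k}_h$ holds uniformly. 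The main obstacle is purely bookkeeping: correctly accounting for (i) the sign and power of $2$ and $i$ coming from the $\cos_h,\sin_h\mapsto e^{\pm ikx}_h$ substitutions in both numerator and denominator, (ii) the ordering of columns after the substitution (which only affects an overall sign that must be absorbed), and (iii) the exact cancellation of the node-dependent row-scaling factors down to $e^{\frac{i}{2}\sum x_k}_h$ with the correct sign; the calibration-on-a-test-function approach is what makes all of this tractable, since linearity of both sides of (\ref{relation_te}) in $f$ combined with the fact that $U_m$ maps $S_m$ onto $\tilde S_m$ (hence both sides vanish on $S_m$, where the divided difference is zero) reduces the whole identity to verifying a single scalar equation.
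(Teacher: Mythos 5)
Your proposal follows essentially the same route as the paper: the paper's proof also starts from the determinant representations (\ref{htrigodd})--(\ref{htrigeven}), uses elementary column operations on adjacent $\cos_h/\sin_h$ column pairs to pass to the exponential basis, and factors the row multipliers $e^{-i\left(\frac{m-1}{2}\right)x_k}_h$ and $e^{-i\left(\frac{m}{2}\right)x_k}_h$ out of numerator and denominator to produce $c_{0,m}$ via (\ref{hDDdefinition}). Your calibration-on-a-test-function device for pinning down the constant is a legitimate supplement, but note one caution: the ``Lagrange-coefficient form'' of the \emph{trigonometric} divided difference is itself derived in the paper from this very lemma, so using it to evaluate the left-hand side would be circular --- you must evaluate the test case directly from (\ref{htrigodd}) (which still forces you through the same change-of-basis bookkeeping), and you must also handle the even case's determinant structure, where the roles of the integer and half-integer frequency bases are swapped between numerator and denominator.
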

\begin{proof}
 Recall  definition (\ref{htrigodd})-(\ref{htrigeven}) of $[x_0,\ldots, x_m;h]_{t}f$ and formulas (\ref{cosseri}) for $\sin_{h}x$ and $\cos_{h}x$. Applying elementary column operations to pairs of adjacent columns and invoking (\ref{hDDdefinition}), we obtain 
	\begin{align*}
		[x_0,\ldots, x_m;h]_{t}f&= (2i)^m\frac{	\det\begin{pmatrix}
				e^{-i\left(\frac{m-1}{2}\right)x}_{h}  &  e^{-i\left(\frac{m-3}{2}\right)x}_{h}   &\ldots  & e^{i\left(\frac{m-1}{2}\right)x}_{h}   &f(x) \\
				x_{0} & x_{1} & \ldots & x_{m-1} & x_{m} 
		\end{pmatrix}}{	\det\begin{pmatrix}
				e^{-i\left(\frac{m}{2}\right)x}_{h}   & e^{-i\left(\frac{m-2}{2}\right)x}_{h}  &\ldots   &e^{i\left(\frac{m}{2}\right)x}_{h}\\
				x_{0} & x_{1}  & \ldots & x_{m} 
		\end{pmatrix}}\\&= (2i)^m \frac{e^{-i\left(\frac{m-1}{2}\right)(x_0+x_1+ \cdots+x_m)}_{h}}{e^{-i\left(\frac{m}{2}\right)(x_0+x_1+ \cdots+x_m)}_{h}}\,\frac{	\det\begin{pmatrix}
				1   & e^{ix}_{h}  &\ldots  & e^{i(m-1)x}_{h}& e^{i\left(\frac{m-1}{2}\right)x}_{h}f(x)\\
				x_{0} & x_{1} & \ldots & x_{m-1} & x_{m} 
		\end{pmatrix}}{	\det\begin{pmatrix}
				1   & e^{ix}_{h}  &\ldots  & e^{imx}_{h}\\
				x_{0} & x_{1} & \ldots   & x_{m} 
		\end{pmatrix}}\\
		&=(2i)^{m} e^{\frac{i}{2}\sum_{k=0}^{m}x_{k}}_{h}[ x_{0}, \ldots, x_{m} ;h ]_{e}\left( e^{i\left(\frac{m-1}{2}\right)x}_{h}f(x)\right).
\end{align*} \end{proof}
\begin{lemma}
	Suppose that $x_{0}< x_{1}< \cdots < x_{m} < x_0+ \frac{2\pi h}{\ln(1+h)}$. Then for any $f \in C([x_0, x_m])$,
	\begin{align*}
		[x_{0}, \ldots, x_{m} ;h ]_{t} f=\sum_{j=0}^{m}\frac{f(x_{j})}{\prod \limits_{k \neq j}\sin_{h}\left(\left(x_{j}-x_{k}\right)/2\right)}.
	\end{align*}
\end{lemma}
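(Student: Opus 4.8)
The plan is to reduce the $h$-trigonometric Lagrange formula to the already-established $h$-exponential Lagrange formula (\ref{GDDlagrange}) via the relation in Lemma \ref{lemmarelation}. By (\ref{relation_te}),
\begin{align*}
[x_0,\ldots,x_m;h]_t f = c_{0,m}\,[x_0,\ldots,x_m;h]_e\!\left(U_m f\right),
\end{align*}
where $c_{0,m}=(2i)^m e^{\frac{i}{2}\sum_{k=0}^m x_k}_h$ and $(U_m f)(x)=e^{i(\frac{m-1}{2})x}_h f(x)$. Applying (\ref{GDDlagrange}) to the function $U_m f$ gives
\begin{align*}
[x_0,\ldots,x_m;h]_e\!\left(U_m f\right)=\sum_{j=0}^m \frac{e^{i(\frac{m-1}{2})x_j}_h f(x_j)}{\prod_{k\neq j}\left(e^{ix_j}_h-e^{ix_k}_h\right)}.
\end{align*}

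Next I would simplify the product in the denominator using identity (\ref{identitysinexp}): for each $k\neq j$,
\begin{align*}
e^{ix_j}_h-e^{ix_k}_h = 2i\,e^{i(x_j+x_k)/2}_h\,\sin_h\!\left(\frac{x_j-x_k}{2}\right).
\end{align*}
Taking the product over the $m$ indices $k\neq j$, the $2i$ factors contribute $(2i)^m$, the exponential factors contribute $e^{\frac{i}{2}\sum_{k\neq j}(x_j+x_k)}_h = e^{\frac{i}{2}(m x_j + \sum_{k\neq j}x_k)}_h = e^{\frac{i}{2}((m-1)x_j + \sum_{k=0}^m x_k)}_h$, using (\ref{exppower}) to combine the $h$-exponentials, and the sine factors contribute $\prod_{k\neq j}\sin_h\!\left((x_j-x_k)/2\right)$.

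Substituting this back, the $j$-th term of the sum becomes
\begin{align*}
\frac{e^{i(\frac{m-1}{2})x_j}_h f(x_j)}{(2i)^m\,e^{\frac{i}{2}((m-1)x_j+\sum_k x_k)}_h\,\prod_{k\neq j}\sin_h\!\left((x_j-x_k)/2\right)}
= \frac{f(x_j)}{(2i)^m\,e^{\frac{i}{2}\sum_k x_k}_h\,\prod_{k\neq j}\sin_h\!\left((x_j-x_k)/2\right)},
\end{align*}
where the $e^{i(\frac{m-1}{2})x_j}_h$ in the numerator cancels the $e^{\frac{i}{2}(m-1)x_j}_h$ in the denominator by (\ref{exppower}). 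Multiplying by $c_{0,m}=(2i)^m e^{\frac{i}{2}\sum_k x_k}_h$ cancels both the $(2i)^m$ and the $e^{\frac{i}{2}\sum_k x_k}_h$ factors uniformly across all $j$, leaving exactly $\sum_{j=0}^m f(x_j)\big/\prod_{k\neq j}\sin_h\!\left((x_j-x_k)/2\right)$, which is the claim. The constraint $x_m - x_0 < 2\pi h/\ln(1+h)$ guarantees that each argument $(x_j-x_k)/2$ avoids the zeros of $\sin_h$, so all denominators are nonzero.

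The argument is essentially bookkeeping; the one place to be careful is the exact exponent accounting in $\prod_{k\neq j}e^{i(x_j+x_k)/2}_h$ — making sure the "diagonal-free" sum $\sum_{k\neq j}(x_j+x_k)$ is rewritten correctly as $(m-1)x_j + \sum_{k=0}^m x_k$ (the term $x_j$ is added $m$ times from the $x_j$-slot and once more inside $\sum_{k=0}^m x_k$, so subtracting gives $m-1$ copies plus the full sum), and that this is precisely what cancels against the numerator factor from $U_m$ and the global constant $c_{0,m}$. I expect this exponent matching to be the only real (and minor) obstacle; everything else follows directly from (\ref{relation_te}), (\ref{GDDlagrange}), (\ref{identitysinexp}), and (\ref{exppower}).
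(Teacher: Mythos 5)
Your proposal is correct and follows essentially the same route as the paper's own proof: apply Lemma \ref{lemmarelation} to reduce to the $h$-exponential Lagrange formula (\ref{GDDlagrange}), then convert each factor $e^{ix_j}_h-e^{ix_k}_h$ via (\ref{identitysinexp}) and cancel against $c_{0,m}$. Your exponent bookkeeping $\sum_{k\neq j}(x_j+x_k)=(m-1)x_j+\sum_{k=0}^m x_k$ matches the paper's computation exactly.
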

\begin{proof}
	By	(\ref{GDDlagrange}), we have
	\begin{align*}
		[ x_{0},\ldots, x_{m} ;h ]_{e} \left(U_mf\right)=\sum_{j=0}^{m}\frac{e^{i\left(\frac{m-1}{2}\right)x_j}_{h}f(x_{j})}{\prod \limits_{k \neq j}\left(e^{ix_{j}}_{h}-e^{ix_{k}}_{h}\right)}.
	\end{align*}
Then Lemmma \ref{lemmarelation} and (\ref{identitysinexp}) yield
	\begin{align*}
		[x_0,\ldots, x_m;h]_{t}f	=	c_{0,m}\sum_{j=0}^{m}\frac{e^{i\left(\frac{m-1}{2}\right)x_j}_{h}f(x_{j})}{\prod \limits_{k \neq j}\left(e^{ix_{j}}_{h}-e^{ix_{k}}_{h}\right)}
		=& c_{0,m}\sum_{j=0}^{m}\frac{e^{i\left(\frac{m-1}{2}\right)x_j}_{h}f(x_{j})}{(2i)^{m}e^{\frac{i}{2}\left(mx_j+\sum_{k\neq j}^{m}x_{k}\right)}_{h}\prod \limits_{k \neq j}\sin_{h}\left(\left(x_{j}-x_{k}\right)/2\right)}\\
		=	&\sum_{j=0}^{m}\frac{f(x_{j})}{\prod \limits_{k \neq j}\sin_{h}\left(\left(x_{j}-x_{k}\right)/2\right)},
	\end{align*}
since by (\ref{coefficient}) all the other factors cancel.
\end{proof}

Next, we shall give a three-term recurrence relation for the $h$-trigonometric divided 
differences.
\begin{lemma}
Suppose that  $m \geq 2$ and  that $x_{0}< x_{1}< \cdots < x_{m} < x_0+ \frac{2\pi h}{\ln(1+h)}$. Then
	\begin{align*}
		[x_0,\ldots, x_m;h]_{t}f=&\gamma_{m}	[x_2,\ldots, x_m;h]_{t}f+ \beta_{m}	[x_1,\ldots, x_{m-1};h]_{t}f\\ &+\alpha_{m}	[x_0,\ldots, x_{m-2};h]_{t}f,
	\end{align*} 
	where
	\begin{align}
		\nonumber	&\gamma_{m}= \left[\left(\sin_{h}\frac{x_{m}-x_{0}}{2}\right)\left(\sin_{h}\frac{x_{m}-x_{1}}{2}\right)\right]^{-1},\\
		\nonumber	&\beta_{m}=\scalebox{1.12}{$-\sin_{h}\left(\frac{x_{m}+x_{m-1}-x_{1}-x_{0}}{2}\right) \left[\left(\sin_{h}\frac{x_{m}-x_{0}}{2}\right)\left(\sin_{h}\frac{x_{m}-x_{1}}{2}\right)\left(\sin_{h}\frac{x_{m-1}-x_{0}}{2}\right)\right]^{-1}$},\\ \nonumber
		&\alpha_{m}= \left[\left(\sin_{h}\frac{x_{m}-x_{0}}{2}\right)\left(\sin_{h}\frac{x_{m-1}-x_{0}}{2}\right)\right]^{-1}.
	\end{align}
\end{lemma}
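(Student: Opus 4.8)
The plan is to reduce the recurrence to a single elementary identity among $h$-sines, starting from the Lagrange-coefficient formula for the $h$-trigonometric divided difference established just above. Applying that formula to each of the four divided differences in the statement turns both sides of the asserted identity into linear functionals on $C([x_0,x_m])$ of the form $f\mapsto\sum_{j=0}^m c_jf(x_j)$; since such a functional is determined by its coefficients, it suffices to compare, for each fixed $j$, the coefficient of $f(x_j)$ on the two sides. Writing $s_{jk}=\sin_h\!\big((x_j-x_k)/2\big)$, the left-hand coefficient is $\big(\prod_{k\neq j}s_{jk}\big)^{-1}$ while the right-hand one is $\gamma_m\big(\prod_{k\neq j,\,2\le k\le m}s_{jk}\big)^{-1}+\beta_m\big(\prod_{k\neq j,\,1\le k\le m-1}s_{jk}\big)^{-1}+\alpha_m\big(\prod_{k\neq j,\,0\le k\le m-2}s_{jk}\big)^{-1}$, a summand being understood as absent when $x_j$ is not among its nodes. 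Multiplying through by $\prod_{k\neq j}s_{jk}$ — and noting that an absent summand is in any case annihilated by a factor $\sin_h 0=0$ — the whole comparison collapses to the single uniform requirement
\[
\gamma_m\,s_{j0}s_{j1}+\beta_m\,s_{j0}s_{jm}+\alpha_m\,s_{j,m-1}s_{jm}=1,\qquad j=0,1,\ldots,m .
\]

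To dispatch all of these at once I would regard the left-hand side as a function $\psi(y)$, obtained by replacing $x_j$ with a variable $y$. Expanding each of its three products via $\sin_h u\sin_h v=\tfrac12\{\cos_h(u-v)-\cos_h(u+v)\}$ together with the addition formula for $\cos_h$ shows $\psi\in\text{span}\{1,\cos_h y,\sin_h y\}=T_{3,h}$, hence $\psi-1\in T_{3,h}$. Because under $y\mapsto\frac{\ln(1+h)}{h}y$ an element of $T_{3,h}$ becomes an ordinary trigonometric polynomial of degree $\le 1$, a nonzero element of $T_{3,h}$ has at most two zeros on an interval of length $<2\pi h/\ln(1+h)$; so it is enough to check $\psi(x_0)=\psi(x_1)=\psi(x_m)=1$ at these three admissible nodes. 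At $y=x_0$ and $y=x_m$ two of the three products in $\psi$ carry a factor $\sin_h 0$ and vanish, and, using that $\sin_h$ is odd, the survivor reduces to $\alpha_m\sin_h\tfrac{x_{m-1}-x_0}{2}\sin_h\tfrac{x_m-x_0}{2}=1$ and to $\gamma_m\sin_h\tfrac{x_m-x_0}{2}\sin_h\tfrac{x_m-x_1}{2}=1$ respectively, both of which are immediate from the stated formulas for $\alpha_m$ and $\gamma_m$.

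The one computation with substance is $\psi(x_1)=1$: here the $\gamma_m$-summand drops out, and after inserting the formulas for $\alpha_m,\beta_m$ and clearing the common denominator $\sin_h\tfrac{x_m-x_0}{2}\sin_h\tfrac{x_{m-1}-x_0}{2}$ the claim takes the shape
\[
\sin_h\tfrac{x_{m-1}-x_1}{2}\sin_h\tfrac{x_m-x_1}{2}+\sin_h\tfrac{x_m+x_{m-1}-x_0-x_1}{2}\sin_h\tfrac{x_1-x_0}{2}=\sin_h\tfrac{x_{m-1}-x_0}{2}\sin_h\tfrac{x_m-x_0}{2}.
\]
I would prove this by expanding all three products with the product-to-sum formula: the two occurrences of $\cos_h\tfrac{x_{m-1}-x_m}{2}$ cancel, leaving a difference of two $\cos_h$'s, and the sum-to-product identity $\cos_h A-\cos_h B=-2\sin_h\tfrac{A+B}{2}\sin_h\tfrac{A-B}{2}$ (a rearrangement of the product-to-sum relations already recorded) converts that difference into precisely the remaining product $\sin_h\tfrac{x_m+x_{m-1}-x_0-x_1}{2}\sin_h\tfrac{x_1-x_0}{2}$.

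I expect this final identity to be the only delicate point — the care lies in keeping the arguments and the signs coming from the oddness of $\sin_h$ straight — while the rest of the argument is purely formal. An alternative would be to apply the exponential recursion (\ref{recursion}) twice to $[x_0,\ldots,x_m;h]_e(U_mf)$ and then translate back via Lemma~\ref{lemmarelation}; but the mismatch between the multipliers $U_m$ and $U_{m-2}$ attached to the lower-order terms then forces an additional Leibniz expansion, so the Lagrange-coefficient route above is cleaner.
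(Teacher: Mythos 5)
Your proof is correct, but it takes a genuinely different route from the paper's. The paper argues on the exponential side: it writes $[x_0,\ldots,x_m;h]_{t}f=c_{0,m}[x_0,\ldots,x_m;h]_{e}(U_mf)$, applies the recursion (\ref{recursion}) once, uses $(U_mf)(x)=e^{ix}_{h}(U_{m-2}f)(x)$ together with the Leibniz rule (\ref{Leibniz}) to peel off one exponential factor, applies (\ref{recursion}) again to the resulting order-$(m-1)$ differences, combines like terms, and only at the end converts the exponential prefactors into $h$-sines via (\ref{identitysinexp}); that derivation \emph{produces} the coefficients $\alpha_m,\beta_m,\gamma_m$ rather than assuming them. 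You instead \emph{verify} the stated coefficients: the Lagrange-coefficient lemma turns all four divided differences into combinations of point evaluations, comparing the coefficient of $f(x_j)$ collapses the claim to the single identity $\gamma_m s_{j0}s_{j1}+\beta_m s_{j0}s_{jm}+\alpha_m s_{j,m-1}s_{jm}=1$ (your accounting of the absent summands is right, since each is exactly the one killed by a factor $\sin_h 0$), and the observation that the left-hand side, as a function of the node, lies in the three-dimensional space $\mathrm{span}\{1,\cos_h y,\sin_h y\}$ --- whose nonzero elements have at most two zeros on an interval of length less than $2\pi h/\ln(1+h)$ --- lets you check it at only $x_0$, $x_1$, $x_m$, two of which are immediate; the remaining product-to-sum computation at $y=x_1$ checks out. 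Your route is more elementary and self-contained, trading the operator manipulations for one explicit $h$-sine identity plus a zero-counting argument; the paper's route is constructive and stays inside the divided-difference calculus that drives the rest of Section \ref{section5}. Both are valid.
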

\begin{proof}
	Let
	\begin{align*}
		c_{j,m}=(2i)^{m}e^{\frac{i}{2}\sum_{k=0}^{m}x_{k+j}}_h.
	\end{align*}
	By (\ref{relation_te}) of Lemma \ref{lemmarelation},
	\begin{align*}
		[x_0,\ldots, x_m;h]_{t}f=c_{0,m}[ x_{0},  \ldots, x_{m} ;h ]_{e}\left( U_mf\right) .
	\end{align*}
	Using (\ref{recursion}), 
	\begin{align*}
		(e^{ix_m}_{h}-e^{ix_0}_{h})[ x_{0}, \ldots, x_{m} ;h ]_{e} \left( U_mf\right) =[  x_{1}, \ldots, x_{m} ;h ]_{e} \left( U_mf\right) -[ x_{0},  \ldots, x_{m-1} ;h ]_{e} \left( U_mf\right) .
	\end{align*}
	Since $(U_mf)(x)=e^{ix}_{h}(U_{m-2}f)(x)$, we may write
	\begin{align*}
		(e^{ix_m}_{h}-e^{ix_0}_{h})[ x_{0}, \ldots, x_{m} ;h ]_{e} \left( U_mf\right) =[  x_{1}, \ldots, x_{m} ;h ]_{e} \left( e^{ix}_{h}U_{m-2}f\right) -[ x_{0},  \ldots, x_{m-1} ;h ]_{e} \left(e^{ix}_{h}U_{m-2}f \right).
	\end{align*}
	By  (\ref{Leibniz}), 
	\begin{align*}
		(e^{ix_m}_{h}-e^{ix_0}_{h})[ x_{0}, \ldots, x_{m} ;h ]_{e} \left(U_mf\right)=& e^{ix_{1}}_{h}[  x_{1}, \ldots, x_{m} ;h ]_{e}\left( U_{m-2}f \right) +[  x_{2}, \ldots, x_{m} ;h ]_{e} \left( U_{m-2}f\right) \\-&e^{ix_{m-1}}_{h}[ x_{0},  \ldots, x_{m-1} ;h ]_{e} \left(U_{m-2}f \right)\\-&[ x_{0},  \ldots, x_{m-2} ;h ]_{e} \left( U_{m-2}f\right) .
	\end{align*}
	Applying (\ref{recursion}) to the divided differences of order $m - 1$ and then combining pairs of like terms yields
	\begin{align*}
		&(e^{ix_m}_{h}-e^{ix_0}_{h})[ x_{0}, \ldots, x_{m} ;h ]_{e} \left( U_mf\right)\\&=\frac{e^{ix_{1}}_{h}}{e^{ix_{m}}_{h}-e^{ix_{1}}_{h}} \left\{ [  x_{2}, \ldots, x_{m} ;h ]_{e} \left(U_{m-2}f \right)-[  x_{1}, \ldots, x_{m-1} ;h ]_{e}  \left(U_{m-2}f \right)\right\}\\&+[  x_{2}, \ldots, x_{m} ;h ]_{e}  \left(U_{m-2}f \right)\\&-\frac{e^{ix_{m-1}}_{h}}{e^{ix_{m-1}}_{h}-e^{ix_{0}}_{h}}\left\{ [ x_{1},  \ldots, x_{m-1} ;h ]_{e}  \left(U_{m-2}f \right)-[ x_{0},  \ldots, x_{m-2} ;h ]_{e}  \left(U_{m-2}f \right)\right\}\\&-[ x_{0},  \ldots, x_{m-2} ;h ]_{e}  \left(U_{m-2}f \right).
\\&=	\frac{e^{ix_{m}}_{h}}{\left( e^{ix_{m}}_{h}-e^{ix_{1}}_{h}\right)}  [  x_{2}, \ldots, x_{m} ;h ]_{e}  \left(U_{m-2}f \right)-	\frac{e^{i(x_{m}+x_{m-1})}_{h}-e^{i(x_{1}+x_{0})}_{h}}{(e^{ix_m}_{h}-e^{ix_1}_{h})\left( e^{ix_{m-1}}_{h}-e^{ix_{0}}_{h} \right)} [ x_{1},  \ldots, x_{m-1} ;h ]_{e}  \left(U_{m-2}f \right)\\&+	\frac{e^{ix_0}_{h}}{(e^{ix_{m-1}}_{h}-e^{ix_0}_{h})}[ x_{0},  \ldots, x_{m-2} ;h ]_{e}  \left(U_{m-2}f \right).
	\end{align*}
	Multipliying both sides by $c_{0,m}/(e^{ix_{m}}_{h}-e^{ix_0}_{h})$ and then applying  (\ref{relation_te})-(\ref{coefficient}) of Lemma \ref{lemmarelation} and  (\ref{identitysinexp}) completes the proof.
\end{proof}

\section{$h$-Trigonometric B-splines} \label{section5}
In preparation for the definition of the $h$-trigonometric B-splines, we start by defining the $h$-analogue of raising
a function to an integer power:
\begin{align*}
	&f(x)^{0}_{h}=1,\\
	&f(x)^{m}_{h}=\prod_{j=0}^{m-1} f(x-jh) \quad \mbox{for}\quad m\geq1.
\end{align*}
Then
\begin{align}\label{derivativepower}
\Delta_{h}\left(f(x)^{m}_{h}\right)=\left(\frac{f(x+h)-f(x-(m-1)h)}{h}\right)f(x)^{m-1}_{h} \quad \mbox{for}\quad m\geq1.
\end{align}
Moreover, we shall have occasion to use the following formulas treating $y$ as a constant and $x$ as a variable:
\begin{align}\nonumber
	&\left(e^{iy}_{h}-e^{ix}_{h}\right)^{0}_{h}=1,\\\nonumber
	&\left(e^{iy}_{h}-e^{ix}_{h}\right)^{m}_{h}= \prod_{j=0}^{m-1}\left(e^{iy}_{h}-e^{i(x-jh)}_{h}\right) \quad \mbox{for}\quad m\geq1,
\end{align} 
and 
\begin{align*}\nonumber
	&\left(\sin_{h}\frac{y-x}{2}\right)^{0}_{h}=1,\\
	&\left(\sin_{h}\frac{y-x}{2}\right)^{m}_{h}= \prod_{j=0}^{m-1} \sin_{h}\left(\frac{y-(x-jh)}{2}\right) \quad \mbox{for}\quad m\geq1.
\end{align*}
With this preparation, we are now ready to define the $h$-trigonometric B-splines.
\begin{definition}[\textbf{$h$-trigonometric B-splines}]\label{deftrigh}
	Let  $\{x_i\}$ be a non-decreasing sequence of real numbers satisfying  $0<x_{j+m}-x_{j}< \frac{2\pi h}{\ln(1+h)}$ for all $j$, where $m \geq 1$ is a fixed integer. We define the $h$-trigonometric B-splines $\left\{ T_{j,m}(x;h)\right\}$ by setting  $T_{j,m}(x;h)\equiv0$ if $x_{j+m}=x_j$, and if $x_{j+m}>x_{j}$
\begin{align}\nonumber
	T_{j,1}(x;h)=\left\{\begin{array}{ll}		1\Big/ \sin_{h}\left(\frac{x_{j+1}-x_j}{2}\right),	& \mbox{if}\quad x_j \leq x < x_{j+1}\\
		0,	& \mbox{otherwise},
	\end{array}\right.\\
\label{htrigbspline}
	T_{j,m}(x;h)=[x_{j},\ldots, x_{j+m};h]_{t}\left(\sin_{h}\frac{y-x}{2}\right)^{m-1}_{+} \quad \mbox{for} \quad m\geq 1,
\end{align}
where
\begin{eqnarray}\nonumber
	\left(\sin_{h}\frac{y-x}{2}\right)^{m-1}_{+}=\left\{\begin{array}{ll}
		\left(\sin_{h}	\frac{y-x}{2}\right)^{m-1}_{h},	& \mbox{if}\quad y>x,\\
		0,	& \mbox{otherwise},
	\end{array}\right.
\end{eqnarray}is  the $h$-trigonometric version of the truncated-power function. The divided difference in  (\ref{htrigbspline}) is taken with respect to $y$, and $0^0$ is defined to be $0$.
\end{definition}
Similar to (\ref{htrigbspline}), we define a family of complex-valued functions $E_{j,m}(x;h)$  by setting $E_{j,m}(x;h)\equiv0$ if $x_{j+m}=x_j$, and if $x_{j+m}>x_{j}$
\begin{align}\nonumber
	E_{j,1}(x;h)=\left\{\begin{array}{ll}
		1\Big/ \left(e^{ix_{j+1}}_{h}-e^{ix_j}_{h} \right),	& \mbox{if}\quad x_j \leq x < x_{j+1},\\
		0,	& \mbox{otherwise},
	\end{array}\right.\\
\label{hexpbspline}
	E_{j,m}(x;h)=   [ x_{j}, \ldots, x_{j+m}; h ]_{e} \left(e^{iy}_{h}-e^{ix}_{h}\right)^{m-1}_{+},
\end{align}
where
\begin{eqnarray}\nonumber
	\left(e^{iy}_{h}-e^{ix}_{h}\right)^{m-1}_{+}=\left\{\begin{array}{ll}
		\left(e^{iy}_{h}-e^{ix}_{h}\right)^{m-1}_{h}	& \mbox{if}\quad y>x,\\
		0,	& \mbox{otherwise},
	\end{array}\right.
\end{eqnarray} is the $h$-exponential version of the truncated-power function. Once again, the divided difference in (\ref{hexpbspline}) is taken with respect to $y$, and $0^0$ is defined to be $0$.
\begin{lemma}
	If $x_{j+m}>x_{j}$ then $	E_{j,m}(x;h)$ satisfies the recurrence relation
	\begin{align}\label{recurrencebsplinee}
		E_{j,m}(x;h)=\frac{e^{i(x-(m-2)h)}_{h}-e^{ix_{j}}_{h}}{e^{ix_{j+m}}_{h}-e^{ix_{j}}_{h}} 	E_{j,m-1}(x;h)+\frac{e^{ix_{j+m}}_{h}-e^{i(x-(m-2)h)}_{h}}{e^{ix_{j+m}}_{h}-e^{ix_{j}}_{h}} 	E_{j+1,m-1}(x;h). 
	\end{align}
\end{lemma}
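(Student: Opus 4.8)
The plan is to mimic the classical derivation of the de Boor recurrence, but carried out for the $h$-exponential divided difference. Write $m' = m-1$ and start from the definition $E_{j,m}(x;h) = [x_j,\ldots,x_{j+m};h]_e \, g_m$ where $g_m(y) = (e^{iy}_h - e^{ix}_h)^{m-1}_+$ is the $h$-exponential truncated power, the divided difference being taken in $y$. The key algebraic observation is the factorization $g_m(y) = (e^{iy}_h - e^{i(x-(m-2)h)}_h)\, g_{m-1}(y)$ for $y > x$, which follows directly from the definition of $f(y)^{m-1}_h = \prod_{k=0}^{m-2}(\cdots)$ by peeling off the last factor $j = m-2$; one must check that this identity also holds at the breakpoint $y = x$ (both sides vanish there because of the $+$ truncation and the convention $0^0 = 0$). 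So on all of $\mathbb{R}$ we have $g_m(y) = \phi(y)\, g_{m-1}(y)$ with $\phi(y) = e^{iy}_h - e^{i(x-(m-2)h)}_h$, where $\phi$ is a member of $\tilde{S}_2 = \mathrm{span}\{1, e^{iy}_h\}$ in the variable $y$.

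The main step is then to apply the Leibniz rule (\ref{Leibniz}) to $[x_j,\ldots,x_{j+m};h]_e(\phi \cdot g_{m-1})$. Since $\phi$ lies in $\tilde{S}_2$, all $h$-exponential divided differences of $\phi$ over two or more distinct nodes vanish (by the remark after (\ref{dcoeficient}): if $f\in\tilde{S}_m$ then its divided difference over $m+1$ nodes is zero; here $\phi\in\tilde S_2$ so $[x_a,x_b,\ldots;h]_e\phi = 0$ for three or more nodes, and in fact $[x_a,x_b;h]_e\phi = 1$). Concretely, the Leibniz sum $\sum_{k=0}^{m}[x_j,\ldots,x_{j+k};h]_e\phi \cdot [x_{j+k},\ldots,x_{j+m};h]_e g_{m-1}$ collapses to just the $k=0$ and $k=1$ terms: the $k=0$ term gives $\phi(x_j)\, [x_j,\ldots,x_{j+m};h]_e g_{m-1}$, and the $k=1$ term gives $[x_j,x_{j+1};h]_e\phi \cdot [x_{j+1},\ldots,x_{j+m};h]_e g_{m-1} = [x_{j+1},\ldots,x_{j+m};h]_e g_{m-1}$ using $[x_j,x_{j+1};h]_e\phi = \phi$ over two nodes $= 1$. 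But note $g_{m-1}$ is a function of $y$ depending on the parameter $x$, and $[x_j,\ldots,x_{j+m};h]_e g_{m-1}$ is a divided difference over $m+1$ nodes of a function from the $(m-1)$-truncated-power family — i.e. it is itself $E_{j,m-1}(x;h)$ only after one more recursion step; more precisely I must recognize $[x_{j+1},\ldots,x_{j+m};h]_e g_{m-1} = E_{j+1,m-1}(x;h)$ and $[x_j,\ldots,x_{j+m};h]_e g_{m-1}$ as a divided difference over one too many nodes that must be reduced via the recursion (\ref{recursion}).

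The cleaner route, and the one I would actually write: apply the recursion (\ref{recursion}) first to $E_{j,m}(x;h) = [x_j,\ldots,x_{j+m};h]_e g_m$, writing it as $\big([x_{j+1},\ldots,x_{j+m};h]_e g_m - [x_j,\ldots,x_{j+m-1};h]_e g_m\big)/(e^{ix_{j+m}}_h - e^{ix_j}_h)$. Then on each of the two order-$(m-1)$ divided differences, substitute $g_m(y) = \phi(y) g_{m-1}(y)$ and apply Leibniz (\ref{Leibniz}); each collapses to two terms because $[\cdot;h]_e\phi$ vanishes on three or more nodes. This yields $[x_{j+1},\ldots,x_{j+m};h]_e g_m = \phi(x_{j+1}) E_{j+1,m-1}(x;h) + [x_{j+2},\ldots,x_{j+m};h]_e g_{m-1}$-type terms; after collecting, the bracketed order-$(m-1)$ pieces of $g_{m-1}$ recombine into $E_{j,m-1}$ and $E_{j+1,m-1}$, and the $\phi$-values at the endpoint nodes telescope. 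Tracking the coefficients: one obtains $\phi(x_{j+1})$ and $\phi(x_j)$ combining against $e^{ix_{j+m}}_h - e^{ix_j}_h$ in the denominator, and after simplification the coefficient of $E_{j,m-1}(x;h)$ becomes $(e^{i(x-(m-2)h)}_h - e^{ix_j}_h)/(e^{ix_{j+m}}_h - e^{ix_j}_h)$ and that of $E_{j+1,m-1}(x;h)$ becomes $(e^{ix_{j+m}}_h - e^{i(x-(m-2)h)}_h)/(e^{ix_{j+m}}_h - e^{ix_j}_h)$, exactly (\ref{recurrencebsplinee}); note the two coefficients sum to $1$, a useful sanity check.

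The main obstacle I anticipate is the bookkeeping at the truncation breakpoint and in the Leibniz collapse: I must be careful that $g_m(y) = \phi(y)g_{m-1}(y)$ holds as an identity of continuous functions (including the transition at $y=x$ under the $0^0=0$ convention), since the divided-difference identities require genuine functions on $[x_j, x_{j+m}]$, not just pointwise agreement away from $x$. I also need to verify that $\phi(y) = e^{iy}_h - e^{i(x-(m-2)h)}_h \in \tilde{S}_2$ in $y$ so that its $h$-exponential divided difference over $\geq 3$ nodes is zero and over $2$ nodes equals the leading coefficient $1$ by (\ref{dcoeficient}) — this is where the shift $x \mapsto x-(m-2)h$ genuinely enters, since $e^{i(x-(m-2)h)}_h$ must be treated as a constant in $y$. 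Once these two points are nailed down, the rest is routine algebra with the $h$-exponential identities.
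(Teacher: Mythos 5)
Your proposal is correct and uses essentially the same ingredients as the paper's proof: peel off the factor $e^{iy}_{h}-e^{i(x-(m-2)h)}_{h}$ from the truncated power, collapse the Leibniz sum to two terms because the divided differences of that factor vanish over three or more nodes (and equal $1$ over two), and finish with the recursion (\ref{recursion}). The paper applies Leibniz first to the full order-$m$ divided difference and then the recursion once, whereas your preferred write-up reverses the order, but this is only a cosmetic rearrangement of the same argument, and your coefficient bookkeeping checks out.
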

\begin{proof} We apply the Leibniz rule (\ref{Leibniz}) to the $m-$th order divided difference of the function in (\ref{hexpbspline}),
	\begin{align*}
		\left(e^{iy}_{h}-e^{ix}_{h}\right)^{m-1}_{+}=\left(e^{iy}_{h}-e^{ix}_{h}\right)^{m-2}_{+}\left(e^{iy}_{h}-e^{i(x-(m-2)h)}_{h}\right).
	\end{align*}
 Then 
	\begin{align*}
		E_{j,m}(x;h)&= [ x_{j},\ldots, x_{j+m}; h ]_{e} \left(e^{iy}_{h}-e^{ix}_{h}\right)^{m-2}_{+}\left(e^{ix_{j+m}}_{h}-e^{i(x-(m-2)h)}_{h}\right)\\&+
		[ x_{j}, \ldots, x_{j+m-1}; h ]_{e} \left(e^{iy}_{h}-e^{ix}_{h}\right)^{m-2}_{+}
	\end{align*}
	since all $h$-exponential divided differences in $y$ of $e^{iy}_{h}-e^{ix-(m-2)h}_{h}$ of order $2$ and higher vanish. Then, the recurrence relation (\ref{recursion}) yields
	\begin{align*}
		&E_{j,m}(x;h)\\&=\frac{e^{ix_{m+j}}_{h}-e^{i(x-(m-2)h)}_{h}}{e^{ix_{j+m}}_{h}-e^{ix_{j}}_{h}} \left( [ x_{j+1}, \ldots, x_{j+m}; h ]_{e} \left(e^{iy}_{h}-e^{ix}_{h}\right)^{m-2}_{+}-[ x_{j}, \ldots, x_{j+m-1}; h ]_{e} \left(e^{iy}_{h}-e^{ix}_{h}\right)^{m-2}_{+}\right)\\&
		+[ x_{j},\ldots, x_{j+m-1}; h ]_{e} \left(e^{iy}_{h}-e^{ix}_{h}\right)^{m-2}_{+}.
	\end{align*}
Now the result follows from  (\ref{hexpbspline}).
\end{proof}
\begin{lemma}
	\begin{align} \label{derivativeE}
		\Delta_hE_{j,m}(x;h)&= \frac{ \left(e^{-i(m-1)h}_{h}-1\right)e^{i(x+h)}_{h} }{\left(e^{ix_{j+m}}_{h}-e^{ix_{j}}_{h}\right)h} \left(E_{j+1,m-1}(x;h)-	E_{j,m-1}(x;h) \right).
	\end{align}
\end{lemma}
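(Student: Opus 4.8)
The plan is to use the linearity of the $h$-exponential divided difference in its function argument (clear from the determinant form (\ref{hexpbspline}), or from the Lagrange formula) together with the fact that the knots $x_{j},\ldots,x_{j+m}$ do not depend on $x$, so that the operator $\Delta_{h}$ in the variable $x$ commutes past the divided difference taken in $y$:
\begin{align*}
\Delta_{h}E_{j,m}(x;h)=[ x_{j},\ldots,x_{j+m};h ]_{e}\Bigl(\Delta_{h}\bigl(e^{iy}_{h}-e^{ix}_{h}\bigr)^{m-1}_{+}\Bigr),
\end{align*}
where inside the brackets $\Delta_{h}$ differences $x$ while the divided difference is still taken with respect to $y$. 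One may assume $m\geq2$: for $m=1$ the prefactor $e^{-i(m-1)h}_{h}-1=e^{0}_{h}-1$ vanishes and the claim reduces to $\Delta_{h}E_{j,1}\equiv0$ off the jump of the step function $E_{j,1}$, which is immediate. Thus the whole statement reduces to computing the $h$-difference in $x$ of the truncated power $\bigl(e^{iy}_{h}-e^{ix}_{h}\bigr)^{m-1}_{+}$.

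First I would evaluate this $h$-difference for $y\le x$ (where it vanishes identically) and for $y\ge x+h$. When $y>x+h$ both $\bigl(e^{iy}_{h}-e^{i(x+h)}_{h}\bigr)^{m-1}_{+}$ and $\bigl(e^{iy}_{h}-e^{ix}_{h}\bigr)^{m-1}_{+}$ reduce to plain $h$-powers, so the $h$-power differentiation rule (\ref{derivativepower}) applies with $f(x)=e^{iy}_{h}-e^{ix}_{h}$ and gives
\begin{align*}
\Delta_{h}\bigl(e^{iy}_{h}-e^{ix}_{h}\bigr)^{m-1}_{h}=\frac{\bigl(e^{iy}_{h}-e^{i(x+h)}_{h}\bigr)-\bigl(e^{iy}_{h}-e^{i(x-(m-2)h)}_{h}\bigr)}{h}\,\bigl(e^{iy}_{h}-e^{ix}_{h}\bigr)^{m-2}_{h}.
\end{align*}
Using $e^{i(x-(m-2)h)}_{h}=e^{i(x+h)}_{h}\,e^{-i(m-1)h}_{h}$, an instance of (\ref{exppower}), the numerator collapses to $e^{i(x+h)}_{h}\bigl(e^{-i(m-1)h}_{h}-1\bigr)$, so that
\begin{align}\label{plan:trunc}
\Delta_{h}\bigl(e^{iy}_{h}-e^{ix}_{h}\bigr)^{m-1}_{+}=\frac{\bigl(e^{-i(m-1)h}_{h}-1\bigr)e^{i(x+h)}_{h}}{h}\,\bigl(e^{iy}_{h}-e^{ix}_{h}\bigr)^{m-2}_{+}
\end{align}
for every $y$ with $y\le x$ or $y\ge x+h$; the borderline value $y=x+h$ is checked directly by the same product manipulation. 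Since a divided difference depends only on the values of its argument at the knots $x_{j},\ldots,x_{j+m}$, identity (\ref{plan:trunc}) holds at those knots as long as none of them lies strictly between $x$ and $x+h$ — the point to which I return below — and it may then be substituted into the displayed expression for $\Delta_{h}E_{j,m}(x;h)$.

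It remains to pull the ($y$-independent) constant out of the divided difference and to run the recursion (\ref{recursion}) backwards on $F(y)=\bigl(e^{iy}_{h}-e^{ix}_{h}\bigr)^{m-2}_{+}$:
\begin{align*}
[ x_{j},\ldots,x_{j+m};h ]_{e}F=\frac{[ x_{j+1},\ldots,x_{j+m};h ]_{e}F-[ x_{j},\ldots,x_{j+m-1};h ]_{e}F}{e^{ix_{j+m}}_{h}-e^{ix_{j}}_{h}}.
\end{align*}
By definition (\ref{hexpbspline}) the two divided differences on the right are precisely $E_{j+1,m-1}(x;h)$ and $E_{j,m-1}(x;h)$, and assembling the pieces yields (\ref{derivativeE}).

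The one genuine subtlety I expect is the handling of the truncation. The $h$-difference of a truncated power is not itself a truncated power: the kink at the knot $y=x$ is pushed to $y=x+h$, so (\ref{plan:trunc}) actually fails on the open interval $(x,x+h)$, and the argument must lean on the fact that the divided difference only sees the fixed knots $x_{j},\ldots,x_{j+m}$ — so one needs these knots to avoid $(x,x+h)$, which is automatic, e.g., when $x$ and the knots lie on a common $h$-lattice (the natural discrete setting). Everything else — the collapse of the numerator via (\ref{exppower}), the reverse recursion step, and the exponent bookkeeping — is routine. One could instead avoid touching the truncated power head-on by starting from the factorization $\bigl(e^{iy}_{h}-e^{ix}_{h}\bigr)^{m-1}_{+}=\bigl(e^{iy}_{h}-e^{ix}_{h}\bigr)^{m-2}_{+}\bigl(e^{iy}_{h}-e^{i(x-(m-2)h)}_{h}\bigr)$ that underlies the proof of (\ref{recurrencebsplinee}) and applying the $h$-product rule (\ref{derivpr}) in $x$; but this needs an auxiliary formula for $\Delta_{h}$ of a lower-order truncated power, so it runs into the same boundary point.
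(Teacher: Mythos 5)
Your proposal follows essentially the same route as the paper's own proof: commute $\Delta_h$ in $x$ past the divided difference in $y$, apply the $h$-power rule (\ref{derivativepower}) to the truncated power so the numerator collapses to $\left(e^{-i(m-1)h}_{h}-1\right)e^{i(x+h)}_{h}$, pull out the $y$-independent factor, and run the recursion (\ref{recursion}) one step to recognize $E_{j+1,m-1}(x;h)$ and $E_{j,m-1}(x;h)$. The only difference is that you explicitly flag the behaviour of the truncation on the interval $(x,x+h)$ and the $m=1$ degenerate case, points the paper's terser proof passes over in silence; this is a refinement of the same argument, not a different one.
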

\begin{proof}
	Applying the $h$-derivative to $(\ref{hexpbspline})$  and using (\ref{derivativepower}), and  (\ref{recursion}) gives
	\begin{align*}
		\Delta_hE_{j,m}(x;h)=&\left(\frac{e^{-i(m-1)h}_{h}-1}{h}\right)e^{i(x+h)}_{h}[ x_{j}, \ldots, x_{j+m}; h ]_{e} \left(e^{iy}_{h}-e^{ix}_{h}\right)^{m-2}_{+}\\ 
	=& \frac{ \left(e^{-i(m-1)h}_{h}-1\right)e^{i(x+h)}_{h} }{\left(e^{ix_{j+m}}_{h}-e^{ix_{j}}_{h}\right)h}    \Bigg([ x_{j+1}, \ldots, x_{j+m}; h ]_{e} \left(e^{iy}_{h}-e^{ix}_{h}\right)^{m-2}_{+}\\&-[ x_{j}, \ldots, x_{j+m-1}; h ]_{e} \left(e^{iy}_{h}-e^{ix}_{h}\right)^{m-2}_{+}\Bigg).
	\end{align*} 
 Then (\ref{derivativeE}) follows from this equation and (\ref{hexpbspline}).
\end{proof}
\begin{proposition}\label{relationbsplines}
	Let $\tilde{U}_m$ be the multiplication operator defined in (\ref{operator2}). Then,
	\begin{eqnarray}\label{eq.relationbsplines}
		T_{j,m}(x;h)=2ie^{\frac{i}{2}\left(\sum_{k=0}^{m}x_{j+k}+ \binom{m-1}{2}h\right)}_{h}\tilde{U}_{m}E_{j,m}(x;h).
	\end{eqnarray}
\end{proposition}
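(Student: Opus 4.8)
The plan is to reduce the $h$-trigonometric B-spline $T_{j,m}$ to the $h$-exponential B-spline $E_{j,m}$ by relating the two truncated-power functions and then invoking Lemma \ref{lemmarelation}. First I would use the relation $[x_0,\ldots,x_m;h]_t f = c_{0,m}\,[x_0,\ldots,x_m;h]_e(U_m f)$ from (\ref{relation_te})–(\ref{coefficient}), applied with nodes $x_j,\ldots,x_{j+m}$, so that $c_{0,m}$ becomes $(2i)^m e_h^{\frac{i}{2}\sum_{k=0}^m x_{j+k}}$. This converts the divided difference in Definition \ref{deftrigh} into an $h$-exponential divided difference of $U_m$ applied (in the variable $y$) to the truncated power $\bigl(\sin_h\frac{y-x}{2}\bigr)^{m-1}_+$. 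The operator $U_m$ multiplies by $e_h^{i\frac{m-1}{2}y}$, so the key is to express $e_h^{i\frac{m-1}{2}y}\bigl(\sin_h\frac{y-x}{2}\bigr)^{m-1}_h$ in terms of $\bigl(e_h^{iy}-e_h^{ix}\bigr)^{m-1}_h$.

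The main computational step is this identity between the two truncated powers. By (\ref{identitysinexp}), for each factor indexed by $\ell$ in the $h$-power product,
\begin{align*}
	e_h^{iy}-e_h^{i(x-\ell h)} = 2i\,e_h^{\frac{i}{2}(y+x-\ell h)}\,\sin_h\!\Bigl(\frac{y-x+\ell h}{2}\Bigr),
\end{align*}
and by the definition of the $h$-power $\bigl(\sin_h\frac{y-x}{2}\bigr)^{m-1}_h = \prod_{\ell=0}^{m-2}\sin_h\bigl(\frac{y-(x-\ell h)}{2}\bigr)$. Multiplying these $m-1$ factors gives
\begin{align*}
	\bigl(e_h^{iy}-e_h^{ix}\bigr)^{m-1}_h = (2i)^{m-1}\,e_h^{\frac{i}{2}\sum_{\ell=0}^{m-2}(y+x-\ell h)}\,\bigl(\sin_h\tfrac{y-x}{2}\bigr)^{m-1}_h,
\end{align*}
where the exponent sum is $\frac{i}{2}\bigl((m-1)(y+x) - \binom{m-1}{2}h\bigr)$ using $\sum_{\ell=0}^{m-2}\ell = \binom{m-1}{2}$. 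Hence
\begin{align*}
	e_h^{i\frac{m-1}{2}y}\bigl(\sin_h\tfrac{y-x}{2}\bigr)^{m-1}_h = (2i)^{-(m-1)}\,e_h^{-\frac{i}{2}\bigl((m-1)x-\binom{m-1}{2}h\bigr)}\,\bigl(e_h^{iy}-e_h^{ix}\bigr)^{m-1}_h.
\end{align*}
Crucially, the factors $e_h^{-\frac{i}{2}((m-1)x-\binom{m-1}{2}h)}$ and $(2i)^{-(m-1)}$ depend only on $x$, not on $y$, so they pull outside the divided difference in $y$; and the truncation condition $y>x$ is the same on both sides (note $\sin_h\frac{y-x}{2}>0$ precisely when $0<y-x<\frac{2\pi h}{\ln(1+h)}$, matching the node-spacing hypothesis), so the $+$-truncations correspond.

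Assembling the pieces: $T_{j,m}(x;h) = (2i)^m e_h^{\frac{i}{2}\sum_k x_{j+k}}\cdot(2i)^{-(m-1)} e_h^{-\frac{i}{2}((m-1)x-\binom{m-1}{2}h)}\,[x_j,\ldots,x_{j+m};h]_e\bigl(e_h^{iy}-e_h^{ix}\bigr)^{m-1}_+$. The divided difference on the right is exactly $E_{j,m}(x;h)$ by (\ref{hexpbspline}), the scalar $(2i)^m(2i)^{-(m-1)} = 2i$, and the factor $e_h^{-\frac{i}{2}(m-1)x}$ is precisely $\tilde U_m$ acting on $E_{j,m}(x;h)$ by (\ref{operator2}) (with index $m$, the multiplier is $e_h^{-i\frac{m-1}{2}x}$). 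Collecting the remaining $x$-independent exponential $e_h^{\frac{i}{2}\sum_k x_{j+k}}$ together with $e_h^{\frac{i}{2}\binom{m-1}{2}h}$ gives $e_h^{\frac{i}{2}(\sum_{k=0}^m x_{j+k}+\binom{m-1}{2}h)}$, which yields (\ref{eq.relationbsplines}). I also need to check the base case $m=1$ separately: there $U_1 = \tilde U_1 = \mathrm{id}$, $\binom{m-1}{2}h = 0$, the truncated powers are both $\equiv 1$, and (\ref{relation_te}) with $m=1$ gives $c_{0,1}=2i\,e_h^{\frac{i}{2}(x_j+x_{j+1})}$, so $T_{j,1}(x;h) = 2i\,e_h^{\frac{i}{2}(x_j+x_{j+1})}E_{j,1}(x;h)$ on $[x_j,x_{j+1})$ — consistent with the explicit formulas after using (\ref{identitysinexp}) on the denominator $e_h^{ix_{j+1}}-e_h^{ix_j}$. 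The only real obstacle is bookkeeping: carefully tracking which exponential factors depend on $y$ (and must stay inside the divided difference) versus on $x$ alone (and must come out), and verifying the two sign/power constants combine to exactly $2i$; once the truncated-power identity above is in hand, the rest is substitution.
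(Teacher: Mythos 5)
Your proposal is correct and follows essentially the same route as the paper's proof: apply Lemma \ref{lemmarelation} with nodes $x_j,\ldots,x_{j+m}$, convert $\left(\sin_h\frac{y-x}{2}\right)^{m-1}_{+}$ into $\left(e^{iy}_{h}-e^{ix}_{h}\right)^{m-1}_{+}$ factor by factor via (\ref{identitysinexp}), pull the $y$-independent factors $(2i)^{1-m}e^{-\frac{i}{2}\left((m-1)x-\binom{m-1}{2}h\right)}_{h}$ outside the divided difference, and identify the result with $2i\,e^{\frac{i}{2}\left(\sum_{k}x_{j+k}+\binom{m-1}{2}h\right)}_{h}\tilde{U}_m E_{j,m}(x;h)$. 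The bookkeeping of constants and the identification of $e^{-i\frac{m-1}{2}x}_{h}$ with the action of $\tilde{U}_m$ match the paper's computation exactly.
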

\begin{proof}
	By Definition \ref{deftrigh} and Lemma \ref{lemmarelation}, we can rewrite $T_{j,m}(x;h)$ as
	\begin{equation*}
		T_{j,m}(x;h)=(2i)^{m}e^{\frac{i}{2}\sum_{k=0}^{m}x_{j+k}}_{h}[ x_{j}, \ldots, x_{j+m} ;h ]_{e}\left\{ e^{i\left(\frac{m-1}{2}\right)y}_{h}\left(\sin_{h}\frac{y-x}{2}\right)^{m-1}_{+} \right\} .
	\end{equation*}
	Applying (\ref{identitysinexp}) to $\left(\sin_{h}\frac{y-x}{2}\right)^{m-1}_{+}$ yields
	\begin{align*}
		\prod_{j=0}^{m-2}\sin_{h}\left(\frac{y-(x-jh)}{2}\right)=\left(2i\right)^{1-m}e^{-\frac{i}{2}(m-1)\left(x+y\right)+\frac{i}{2}\binom{m-1}{2}h}_{h}\prod_{j=0}^{m-2}\left(e^{iy}_{h}-e^{i(x-jh)}_{h}\right).
	\end{align*}
	Therefore
	\begin{align*}
		T_{j,m}(x;h)=2ie^{\frac{i}{2}\sum_{k=0}^{m}x_{j+k}}_{h}[ x_{j}, \ldots, x_{j+m} ;h ]_{e} \left\{ e^{-i(\frac{m-1}{2})x+\frac{i}{2}\binom{m-1}{2}h}_{h}     	 \left(e^{iy}_{h}-e^{ix}_{h}\right)^{m-1}_{+}\right\}.
	\end{align*}
Now we get (\ref{eq.relationbsplines}) by (\ref{operator2}) and (\ref{hexpbspline}).
\end{proof}
\begin{theorem}[\textbf{Recurrence relation for $h$-trigonometric B-splines}]
	Suppose that $m \geq 2$ and $0<  x_{j+m}-x_{j} <  \frac{2\pi h}{\ln(1+h)}$. Then  $T_{j,m}(x;h)$ satisfies the recurrence relation
	\begin{align}\label{htrigbsplineRR}
		T_{j,m}(x;h)=\frac{\sin_{h}\frac{x-(m-2)h-x_{j}}{2}}{\sin_{h}\frac{x_{j+m}-x_{j}}{2}}\, 	T_{j,m-1}(x;h)+\frac{\sin_{h}\frac{x_{j+m}-x+(m-2)h}{2}}{\sin_{h}\frac{x_{j+m}-x_{j}}{2}}\,	T_{j+1,m-1}(x;h). 
	\end{align}
\end{theorem}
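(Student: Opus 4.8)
The plan is to transfer the recurrence to the $h$-exponential side, where the analogous statement \eqref{recurrencebsplinee} has already been proved, and then translate back via Proposition~\ref{relationbsplines}. Concretely, by \eqref{eq.relationbsplines} and the definition \eqref{operator2} of $\tilde U_m$ we have $T_{j,m}(x;h)=2i\,e^{\frac{i}{2}(S+\binom{m-1}{2}h)}_{h}\,e^{-i(\frac{m-1}{2})x}_{h}\,E_{j,m}(x;h)$, where $S=\sum_{k=0}^{m}x_{j+k}$. Applying the same formula with $m$ replaced by $m-1$ and starting index $j$, respectively $j+1$, gives analogous expressions for $T_{j,m-1}$ and $T_{j+1,m-1}$, in which the knot sum $S$ is replaced by $S-x_{j+m}$, respectively $S-x_{j}$.

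Next I would substitute the exponential recurrence \eqref{recurrencebsplinee} for $E_{j,m}(x;h)$ into the first identity, and then eliminate $E_{j,m-1}$ and $E_{j+1,m-1}$ in favour of $T_{j,m-1}$ and $T_{j+1,m-1}$ using the two $(m-1)$-versions just recorded. This writes $T_{j,m}$ as a linear combination of $T_{j,m-1}$ and $T_{j+1,m-1}$ whose coefficients are each a product of a ratio of $h$-exponential prefactors and one of the two fractions appearing in \eqref{recurrencebsplinee}. It then remains to simplify the coefficients. Using $e^{i(a\pm b)}_{h}=e^{ia}_{h}e^{\pm ib}_{h}$ from \eqref{exppower}, the prefactor ratio in the $T_{j,m-1}$-term collapses to $e^{\frac{i}{2}(x_{j+m}+(m-2)h-x)}_{h}$ (here one uses $\binom{m-1}{2}-\binom{m-2}{2}=m-2$), while rewriting the numerator $e^{i(x-(m-2)h)}_{h}-e^{ix_j}_{h}$ and denominator $e^{ix_{j+m}}_{h}-e^{ix_j}_{h}$ of the surviving fraction via \eqref{identitysinexp} produces an exponential factor $e^{\frac{i}{2}(x-(m-2)h-x_{j+m})}_{h}$ times $\sin_{h}\frac{x-(m-2)h-x_j}{2}\big/\sin_{h}\frac{x_{j+m}-x_j}{2}$. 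The two exponential factors are reciprocal, so they cancel and leave exactly the coefficient of $T_{j,m-1}$ claimed in \eqref{htrigbsplineRR}; the $T_{j+1,m-1}$-term is handled identically, its prefactor ratio being $e^{\frac{i}{2}(x_j+(m-2)h-x)}_{h}$ and the compensating sine factor arising from $e^{ix_{j+m}}_{h}-e^{i(x-(m-2)h)}_{h}$.

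The only real difficulty is the bookkeeping of the $h$-exponential prefactors: the half-knot-sum exponents, the $\binom{\cdot}{2}h$ shifts, the $e^{-i(\frac{m-1}{2})x}_{h}$ factor from $\tilde U_m$, and the exponentials released by \eqref{identitysinexp} must all telescope to $1$ in each of the two coefficients. There is no analytic content — the argument is purely algebraic and is valid under the standing hypothesis $0<x_{j+m}-x_j<\frac{2\pi h}{\ln(1+h)}$, which keeps all the relevant values of $\sin_{h}$ and all the differences $e^{i\,\cdot}_{h}-e^{i\,\cdot}_{h}$ nonzero, while the convention that a $T$-spline over a collapsed knot block vanishes makes both sides consistent in the degenerate cases.
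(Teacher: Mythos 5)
Your proposal is correct and follows essentially the same route as the paper: express $T_{j,m}$ via Proposition~\ref{relationbsplines}, substitute the $h$-exponential recurrence \eqref{recurrencebsplinee} for $E_{j,m}$, convert the exponential differences to $\sin_h$ factors with \eqref{identitysinexp}, and verify that all $h$-exponential prefactors cancel so that the surviving terms are recognized as $T_{j,m-1}$ and $T_{j+1,m-1}$ via \eqref{eq.relationbsplines}. Your bookkeeping of the prefactor ratios (including the identity $\binom{m-1}{2}-\binom{m-2}{2}=m-2$ and the cancelling exponentials) checks out.
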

\begin{proof}
	By Proposition \ref{relationbsplines} and the recurrence relation (\ref{recurrencebsplinee}) for $E_{j,m}(x;h)$, we can write $T_{j,m}(x;h)$ as
	\begin{align*}
		&T_{j,m}(x;h)\\&=2ie^{\frac{i}{2}\left(\sum_{k=0}^{m}x_{k+j}-(m-1)x+\binom{m-1}{2}h\right)}_{h} \Bigg\{\frac{e^{i(x-(m-2)h)}_{h}-e^{ix_{j}}_{h}}{e^{ix_{j+m}}_{h}-e^{ix_{j}}_{h}} 	E_{j,m-1}(x;h)\\&+\frac{e^{ix_{j+m}}_{h}-e^{i(x-(m-2)h)}_{h}}{e^{ix_{j+m}}_{h}-e^{ix_{j}}_{h}} 	E_{j+1,m-1}(x;h) \Bigg\}.
	\end{align*}		
	Applying (\ref{identitysinexp}), we get
	\begin{align*}
		&T_{j,m}(x;h)\\
		&=2i\frac{e^{\frac{i}{2}\left(\sum_{k=1}^{m-1}x_{k+j}+\binom{m-1}{2}h-(m-1)x\right)}_{h} }{\sin_{h}\left(\frac{x_{j+m}-x_{j}}{2}\right)}\Bigg\{e^{\frac{i}{2}(x_{j}+x-(m-2)h)}_{h}\sin_{h}\left(\frac{x-(m-2)h-x_{j}}{2}\right)	E_{j,m-1}(x;h)\\&+ e^{\frac{i}{2}(x_{j+m}+x-(m-2)h)}_{h}\sin_{h}\left(\frac{x_{j+m}-x+(m-2)h}{2}\right)	E_{j+1,m-1}(x;h) \Bigg\},
	\end{align*}		
which by (\ref{eq.relationbsplines}) reduces to the right-hand side of (\ref{htrigbsplineRR}).
\end{proof}
\begin{theorem}[\textbf{$h$-derivative formula for $h$-trigonometric B-splines}]
	Let $m \geq 2$ and $0 < x_{j+m}- x_{j}  <  \frac{2\pi h}{\ln(1+h)}$. Then  
	\begin{align}\nonumber
		\Delta_h T_{j,m}(x;h) &=ie^{\frac{i}{4}(m-1)h }_{h}\frac{\left(e^{-i(m-1)h/2}_{h}-1\right)}{h\sin_{h}\left(\frac{x_{j+m}-x_{j}}{2}\right) }\Bigg\{\cos_{h}\left(\frac{x-(m-3)h/2-x_{j}}{2}\right)\, 	T_{j,m-1}(x;h)\\ \label{htrigderivative}&-\cos_{h}\left(\frac{x_{j+m}-x+(m-3)h/2}{2}\right)\,	T_{j+1,m-1}(x;h)\Bigg\}.
	\end{align}	
\end{theorem}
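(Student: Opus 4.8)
The plan is to differentiate the defining formula (\ref{htrigbspline}) directly, rather than to differentiate the recurrence (\ref{htrigbsplineRR}). For fixed knots the operator $[x_j,\ldots,x_{j+m};h]_t$ is, by its Lagrange-coefficient form, a fixed $x$-independent linear combination of the values of its argument (in the variable $y$) at the knots $x_j,\ldots,x_{j+m}$; hence $\Delta_h$ taken in $x$ passes through it, so that
\[
\Delta_h T_{j,m}(x;h)=[x_j,\ldots,x_{j+m};h]_t\Big(\Delta_h^{x}\big(\sin_h\tfrac{y-x}{2}\big)^{m-1}_{+}\Big).
\]
I would then evaluate $\Delta_h^{x}(\sin_h\tfrac{y-x}{2})^{m-1}_{+}$ by the power rule (\ref{derivativepower}) with $f(x)=\sin_h\tfrac{y-x}{2}$ and exponent $m-1$ — the same step used to derive (\ref{derivativeE}) — which gives $\tfrac1h\big(\sin_h\tfrac{y-x-h}{2}-\sin_h\tfrac{y-x+(m-2)h}{2}\big)(\sin_h\tfrac{y-x}{2})^{m-2}_{+}$, and collapse the difference of sines by the $\sin_h$ addition formulas, $\sin_h A-\sin_h B=2\cos_h\tfrac{A+B}{2}\sin_h\tfrac{A-B}{2}$, into $-2\sin_h\tfrac{(m-1)h}{4}\,\cos_h\tfrac{y-\tilde x}{2}$ with $\tilde x:=x-\tfrac{(m-3)h}{2}$. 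Pulling the $y$-free factor out of the divided difference leaves
\[
\Delta_h T_{j,m}(x;h)=-\tfrac{2}{h}\sin_h\tfrac{(m-1)h}{4}\,[x_j,\ldots,x_{j+m};h]_t\big(\cos_h\tfrac{y-\tilde x}{2}\,(\sin_h\tfrac{y-x}{2})^{m-2}_{+}\big).
\]

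The heart of the proof is to turn this divided difference into a combination of $T_{j,m-1}(x;h)$ and $T_{j+1,m-1}(x;h)$. I would expand $\cos_h\tfrac{y-\tilde x}{2}$, an element of the two-dimensional space $T_{2,h}=\mbox{span}\{\cos_h\tfrac y2,\sin_h\tfrac y2\}$, in the basis $\{\sin_h\tfrac{y-x_j}{2},\,\sin_h\tfrac{x_{j+m}-y}{2}\}$ of $T_{2,h}$ — these two functions form a basis because the relevant $2\times2$ determinant is $\pm\sin_h\tfrac{x_{j+m}-x_j}{2}\ne0$ by the hypothesis $0<x_{j+m}-x_j<\tfrac{2\pi h}{\ln(1+h)}$. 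Evaluating both sides at $y=x_j$ and at $y=x_{j+m}$ determines the coefficients and gives
\[
\cos_h\tfrac{y-\tilde x}{2}=\frac{\cos_h\tfrac{x_{j+m}-\tilde x}{2}}{\sin_h\tfrac{x_{j+m}-x_j}{2}}\,\sin_h\tfrac{y-x_j}{2}+\frac{\cos_h\tfrac{\tilde x-x_j}{2}}{\sin_h\tfrac{x_{j+m}-x_j}{2}}\,\sin_h\tfrac{x_{j+m}-y}{2}.
\]
Substituting this and using the Lagrange-coefficient form of $[x_j,\ldots,x_{j+m};h]_t$: the factor $\sin_h\tfrac{y-x_j}{2}$ kills the $y=x_j$ term of the Lagrange sum and cancels the lone $\sin_h\tfrac{x_{j+\ell}-x_j}{2}$ denominator factor in each remaining term, so that $[x_j,\ldots,x_{j+m};h]_t\big(\sin_h\tfrac{y-x_j}{2}\,(\sin_h\tfrac{y-x}{2})^{m-2}_{+}\big)=[x_{j+1},\ldots,x_{j+m};h]_t(\sin_h\tfrac{y-x}{2})^{m-2}_{+}=T_{j+1,m-1}(x;h)$; symmetrically $\sin_h\tfrac{x_{j+m}-y}{2}$ kills the $y=x_{j+m}$ term and contributes a factor $-1$ to the rest, yielding $-T_{j,m-1}(x;h)$. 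Hence the divided difference equals $\big(\cos_h\tfrac{x_{j+m}-\tilde x}{2}\,T_{j+1,m-1}(x;h)-\cos_h\tfrac{\tilde x-x_j}{2}\,T_{j,m-1}(x;h)\big)\big/\sin_h\tfrac{x_{j+m}-x_j}{2}$.

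Finally I would recast the scalar. By (\ref{identitysinexp}) applied with the two nodes $-\tfrac{(m-1)h}{2}$ and $0$, one gets $e^{-i(m-1)h/2}_{h}-1=-2i\,e^{-i(m-1)h/4}_{h}\sin_h\tfrac{(m-1)h}{4}$, hence $-2\sin_h\tfrac{(m-1)h}{4}=-i\,e^{i(m-1)h/4}_{h}\big(e^{-i(m-1)h/2}_{h}-1\big)$; inserting this, writing $\tilde x-x_j=x-\tfrac{(m-3)h}{2}-x_j$ and $x_{j+m}-\tilde x=x_{j+m}-x+\tfrac{(m-3)h}{2}$, and using that $\cos_h$ is even, reproduces (\ref{htrigderivative}) exactly. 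The one point that needs care is the opening step — that $\Delta_h^{x}$ may be brought inside the $y$-divided difference of a truncated power and that (\ref{derivativepower}) applies to $(\sin_h\tfrac{y-x}{2})^{m-1}_{+}$ with the truncation left in place; this is the identical manipulation already used, without comment, in deriving (\ref{derivativeE}) for the $h$-exponential B-splines, and the hypothesis $m\ge2$ is exactly what is needed there, so that the truncated powers occurring are the ones covered by Definition \ref{deftrigh}. Everything afterwards is mechanical. One could instead push the whole argument through Proposition \ref{relationbsplines}, formula (\ref{derivativeE}), and the recurrence (\ref{htrigbsplineRR}), but the bookkeeping of exponential prefactors then becomes considerably heavier.
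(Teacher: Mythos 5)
Your proof is correct, and it takes a genuinely different route from the paper's. The paper never touches the trigonometric truncated power directly: it applies $\Delta_h$ to the transfer formula (\ref{eq.relationbsplines}) via the $h$-product rule (\ref{derivpr}), substitutes the exponential recurrence (\ref{recurrencebsplinee}) and the exponential derivative formula (\ref{derivativeE}) for $E_{j,m}$, combines like terms, and converts back to $T$'s with (\ref{identitysinexp})--(\ref{identitycosexp}). You instead stay entirely in the trigonometric setting: power rule on $\left(\sin_h\frac{y-x}{2}\right)^{m-1}_{+}$, sum-to-product to produce the single factor $\cos_h\frac{y-\tilde x}{2}$, a two-point interpolation of that factor in the basis $\left\{\sin_h\frac{y-x_j}{2},\,\sin_h\frac{x_{j+m}-y}{2}\right\}$, and the Lagrange form of $[\,\cdot\,;h]_t$ to drop a knot and lower the order --- essentially the Lyche--Winther style argument adapted to the $h$-calculus. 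I checked the details: the expansion coefficients obtained by evaluating at $y=x_j$ and $y=x_{j+m}$ are right (agreement at two points suffices in a two-dimensional space with nonsingular collocation matrix, guaranteed by $0<x_{j+m}-x_j<\frac{2\pi h}{\ln(1+h)}$), the knot-dropping cancellation in the Lagrange sum is exact, and your constant $-\frac{2}{h}\sin_h\frac{(m-1)h}{4}$ equals the paper's prefactor $\frac{i}{h}\,e^{i(m-1)h/4}_{h}\left(e^{-i(m-1)h/2}_{h}-1\right)$ by (\ref{identitysinexp}) with nodes $-\frac{(m-1)h}{2}$ and $0$. Your approach buys transparency (it explains why cosines of exactly those arguments appear) and avoids the heavy exponential-prefactor bookkeeping; the paper's buys reuse of the already-proved exponential lemmas and a uniform template shared with the recurrence (\ref{htrigbsplineRR}). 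The one caveat you flag yourself is real but shared with the paper: applying (\ref{derivativepower}) to a truncated power is only valid at evaluation points $y=x_{j+l}$ not lying strictly between $x$ and $x+h$ (e.g., when knots and argument lie on a common $h$-grid); since the paper's own derivation of (\ref{derivativeE}) makes the identical silent assumption, your proof meets the same standard of rigor. A second minor shared caveat: your use of the Lagrange form presupposes distinct knots, as do the paper's recursion and Lagrange lemmas, even though Definition \ref{deftrigh} nominally allows coincident ones.
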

\begin{proof}
	Taking the $h$-derivative of (\ref{eq.relationbsplines}) and applying the $h$-product rule (\ref{derivpr}), we obtain
	\begin{align*}
		\Delta_h &T_{j,m}(x;h)\\=&2ie^{\frac{i}{2}\left(\sum_{k=0}^{m}x_{k+j}+ \binom{m-1}{2}h\right)}_{h}\bigg(\frac{e^{-i(m-1)h/2}_{h}-1}{h}e^{-i\frac{(m-1)}{2}x}_{h}	E_{j,m}(x;h)+e^{-\frac{i}{2}(m-1)(x+h)}_{h}	\Delta_h E_{j,m}(x;h)\bigg)\\
		=&2ie^{\frac{i}{2}\left(\sum_{k=0}^{m}x_{k+j}+ \binom{m-1}{2}h-(m-1)x\right)}_{h}\left(\frac{e^{-i(m-1)h/2}_{h}-1}{h}	E_{j,m}(x;h)+e^{-\frac{i}{2}(m-1)h}_{h}	\Delta_h E_{j,m}(x;h)\right).
	\end{align*}
	Next, by (\ref{recurrencebsplinee}) and (\ref{derivativeE}), 
	\begin{align*}
		&\Delta_h T_{j,m}(x;h)\\=&   \frac{2ie^{\frac{i}{2}\left(\sum_{k=0}^{m}x_{k+j}+ \binom{m-1}{2}h-(m-1)x\right)}_{h}}{e^{ix_{j+m}}_{h}-e^{ix_{j}}_{h}}\Biggl\{\left(\frac{e^{-i(m-1)h/2}_{h}-1}{h}\right)\Bigg(\left(     e^{i(x-(m-2)h)}_{h}-e^{ix_{j}}_{h}\right)E_{j,m-1}(x;h)\\&+\left(     e^{ix_{j+m}}_{h}-e^{i(x-(m-2)h)}_{h}\right)	E_{j+1,m-1}(x;h)\Bigg)	\\&+              
		e^{-\frac{i}{2}(m-1)h}_{h} \frac{\left(e^{-i(m-1)h}_{h}-1\right)}{h}e^{i(x+h)}_{h} \bigg(E_{j+1,m-1}(x;h) - E_{j,m-1}(x;h) \bigg)	\Biggr\}\\
	=& \frac{2ie^{\frac{i}{2}\left(\sum_{k=0}^{m}x_{k+j}+ \binom{m-1}{2}h-(m-1)x\right)}_{h}}{e^{ix_{j+m}}_{h}-e^{ix_{j}}_{h}}\left(\frac{e^{-i(m-1)h/2}_{h}-1}{h}\right)\Biggl\{ -E_{j,m-1}(x;h) \left(e^{ix_{j}}_{h}+e^{i(x-(m-3)h/2)}_{h} \right)\\&+E_{j+1,m-1}(x;h)\left(e^{ix_{j+m}}_{h}+e^{i(x-(m-3)h/2)}_{h}	\right)\Biggr\},
	\end{align*}
	after combining the pairs of like terms involving $E_{j,m-1}(x;h)$ and $E_{j+1,m-1}(x;h)$.
	Applying  (\ref{identitysinexp}) and (\ref{identitycosexp}), we obtain
	\begin{align*}
		&\Delta_h T_{j,m}(x;h)\\=&    \frac{e^{\frac{i}{2}\left(\sum_{k=0}^{m}x_{k+j}+ \binom{m-1}{2}h-(m-1)x\right)}_{h}}{e^{\frac{i}{2}(x_j+x_{j+m})}_{h}\sin_{h}(\frac{x_{j+m}-x_j}{2})}\left(\frac{e^{-i(m-1)\frac{h}{2}}_{h}-1}{h}\right)      \Biggl\{ -2E_{j,m-1}(x;h)e^{\frac{i}{2}(x_{j}+x-(m-3)\frac{h}{2})}_{h}\cos_{h}\left(\frac{x-(m-3)\frac{h}{2}-x_{j}}{2}\right) \\&+2E_{j+1,m-1}(x;h)e^{\frac{i}{2}(x_{j+m}+x-(m-3)\frac{h}{2})}_{h}\cos_{h}\left(\frac{x_{j+m}-x+(m-3)\frac{h}{2}}{2}\right)	\Biggr\} \\
		=&    \frac{e^{-i(m-1)\frac{h}{2}}_{h}-1}{h\sin_{h}(\frac{x_{j+m}-x_j}{2})}     \Biggl\{ -2E_{j,m-1}(x;h)e^{\frac{i}{2}\left(\sum_{k=0}^{m-1}x_{k+j}+\left(\binom{m-1}{2}-\frac{m-3}{2}\right)h -(m-2)x\right)}_{h}\cos_{h}\left(\frac{x-(m-3)\frac{h}{2}-x_{j}}{2}\right) \\&+2E_{j+1,m-1}(x;h)e^{\frac{i}{2}\left(\sum_{k=1}^{m}x_{k+j}+\left(\binom{m-1}{2}-\frac{m-3}{2}\right)h -(m-2)x\right)}_{h}\cos_{h}\left(\frac{x_{j+m}-x+(m-3)\frac{h}{2}}{2}\right)	\Biggr\}.                    	                         
	\end{align*}
	The last two lines reduce to the right-hand side of (\ref{htrigderivative}) after applying the relation $\binom{m-1}{2}-\frac{m-3}{2}=\binom{m-2}{2}+\frac{m-1}{2}$ and then applying (\ref{eq.relationbsplines}).
\end{proof}

Let
\begin{equation}
	\tilde{E}_{j,m}(x;h)=\left( e^{ix_{j+m}}_{h}-e^{ix_{j}}_{h}\right) E_{j,m}(x;h)
\end{equation}
and
\begin{equation}
	\tilde{T}_{j,m}(x;h)=\sin_{h}\left(\frac{x_{j+m}-x_{j}}{2}\right) T_{j,m}(x;h).
\end{equation}
By (\ref{eq.relationbsplines}) and (\ref{identitysinexp}),
\begin{align}\nonumber
	\tilde{T}_{j,m}(x;h)=&\sin_{h}\left(\frac{x_{j+m}-x_{j}}{2}\right)2ie^{\frac{i}{2}\left(\sum_{k=0}^{m}x_{j+k}+ \binom{m-1}{2}h\right)}_{h}\tilde{U}_{m}E_{j,m}(x;h)\\\nonumber
	=&\left(e^{ix_{j+m}}_{h}-e^{ix_{j}}_{h}\right)e^{\frac{i}{2}\left(\sum_{k=1}^{m-1}x_{j+k}+\binom{m-1}{2}h\right)}_{h}\tilde{U}_{m}E_{j,m}(x;h)\\\label{tbspline marsden}=&e^{\frac{i}{2}\left(\sum_{k=1}^{m-1}x_{j+k}+ \binom{m-1}{2}h\right)}_{h}\tilde{U}_{m}\tilde{E}_{j,m}(x;h).
\end{align}
Now, let $(x_k,x_r)$ be any nonempty interval such that at least $m$ of the values $x_j \leq x_k$  and at least $m$ of the values $x_j \geq x_r$. Next, we give an $h$-analog of  Marsden's  identity.
\begin{theorem}[\textbf{Marsden identity, $h$-exponential form}]For $x\in (x_k,x_r)$ and $y\in \mathbb{R}$,
	\begin{equation}\label{exp_marsden}
		\left(e^{iy}_{h}-e^{ix}_{h}\right)^{m-1}_{h}=\sum_{j}\tilde{E}_{j,m}(x;h)\prod_{l=1}^{m-1}\left(e^{iy}_{h}-e^{ix_{j+l}}_{h}\right).
	\end{equation}
\end{theorem}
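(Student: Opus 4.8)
The plan is to prove (\ref{exp_marsden}) by induction on $m$, driven by the recurrence (\ref{recurrencebsplinee}) for $E_{j,m}$ together with a partition-of-unity splitting of the single linear factor that distinguishes order $m$ from order $m-1$. Throughout, $\sum_j$ denotes a sum over all admissible indices; for fixed $x$ this is a finite sum, since $E_{j,m}(\cdot\,;h)$ vanishes off $[x_j,x_{j+m}]$, and the hypotheses on $(x_k,x_r)$ guarantee that every B-spline whose support meets $(x_k,x_r)$ has its full knot set $x_j,\ldots,x_{j+m}$ available. For the base case $m=1$, formula (\ref{hexpbspline}) gives $\tilde{E}_{j,1}(x;h)=(e^{ix_{j+1}}_h-e^{ix_j}_h)E_{j,1}(x;h)=1$ when $x_j\le x<x_{j+1}$ and $0$ otherwise, so with the empty product equal to $1$ the right-hand side of (\ref{exp_marsden}) is $\sum_j\tilde{E}_{j,1}(x;h)=1=(e^{iy}_h-e^{ix}_h)^0_h$ for every $x\in(x_k,x_r)$.

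For the inductive step, assume (\ref{exp_marsden}) with $m$ replaced by $m-1$. From the factorization $(e^{iy}_h-e^{ix}_h)^{m-1}_h=(e^{iy}_h-e^{ix}_h)^{m-2}_h\bigl(e^{iy}_h-e^{i(x-(m-2)h)}_h\bigr)$ and the induction hypothesis,
\begin{equation*}
	(e^{iy}_h-e^{ix}_h)^{m-1}_h=\bigl(e^{iy}_h-e^{i(x-(m-2)h)}_h\bigr)\sum_j\tilde{E}_{j,m-1}(x;h)\prod_{l=1}^{m-2}\bigl(e^{iy}_h-e^{ix_{j+l}}_h\bigr).
\end{equation*}
For each $j$ with $x_{j+m-1}>x_j$, split the leading factor as $e^{iy}_h-e^{i(x-(m-2)h)}_h=\alpha_j\bigl(e^{iy}_h-e^{ix_j}_h\bigr)+\beta_j\bigl(e^{iy}_h-e^{ix_{j+m-1}}_h\bigr)$ with $\alpha_j+\beta_j=1$; matching the constant terms forces
\begin{equation*}
	\alpha_j=\frac{e^{ix_{j+m-1}}_h-e^{i(x-(m-2)h)}_h}{e^{ix_{j+m-1}}_h-e^{ix_j}_h},\qquad\beta_j=\frac{e^{i(x-(m-2)h)}_h-e^{ix_j}_h}{e^{ix_{j+m-1}}_h-e^{ix_j}_h},
\end{equation*}
and the terms with $x_{j+m-1}=x_j$ may be dropped since then $\tilde{E}_{j,m-1}\equiv0$. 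Using $\bigl(e^{iy}_h-e^{ix_j}_h\bigr)\prod_{l=1}^{m-2}\bigl(e^{iy}_h-e^{ix_{j+l}}_h\bigr)=\prod_{l=1}^{m-1}\bigl(e^{iy}_h-e^{ix_{(j-1)+l}}_h\bigr)$ and $\bigl(e^{iy}_h-e^{ix_{j+m-1}}_h\bigr)\prod_{l=1}^{m-2}\bigl(e^{iy}_h-e^{ix_{j+l}}_h\bigr)=\prod_{l=1}^{m-1}\bigl(e^{iy}_h-e^{ix_{j+l}}_h\bigr)$, I would shift the index in the first family and collect the coefficient of $\prod_{l=1}^{m-1}\bigl(e^{iy}_h-e^{ix_{j+l}}_h\bigr)$, which is $\tilde{E}_{j,m-1}(x;h)\beta_j+\tilde{E}_{j+1,m-1}(x;h)\alpha_{j+1}$. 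Since $\tilde{E}_{j,m-1}=(e^{ix_{j+m-1}}_h-e^{ix_j}_h)E_{j,m-1}$ the denominators cancel, leaving $(e^{i(x-(m-2)h)}_h-e^{ix_j}_h)E_{j,m-1}(x;h)+(e^{ix_{j+m}}_h-e^{i(x-(m-2)h)}_h)E_{j+1,m-1}(x;h)$, which by the recurrence (\ref{recurrencebsplinee}) is exactly $(e^{ix_{j+m}}_h-e^{ix_j}_h)E_{j,m}(x;h)=\tilde{E}_{j,m}(x;h)$, completing the induction.

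The delicate point — the one I expect to require genuine care rather than bookkeeping — is the index shift: a priori it contributes to the coefficient of $\prod_{l=1}^{m-1}(e^{iy}_h-e^{ix_{j+l}}_h)$ only through the overlap of the two index families, so one must check that no spurious boundary terms survive. This is precisely where the assumption on $(x_k,x_r)$ enters. For $x\in(x_k,x_r)$, at a boundary index the ``missing half'' of the coefficient always carries a factor $E_{j,m-1}(x;h)$ whose support $[x_j,x_{j+m-1}]$ does not contain $x$, hence vanishes; thus the recurrence (\ref{recurrencebsplinee}) still delivers $\tilde{E}_{j,m}(x;h)$ at the endpoints, and since the $E_{j,m}(\cdot\,;h)$ are compactly supported every step above is a legitimate identity of polynomials in $e^{iy}_h$ valid for all $y$. (The trigonometric form of the identity should then follow by transporting (\ref{exp_marsden}) through Proposition \ref{relationbsplines} and (\ref{tbspline marsden}).)
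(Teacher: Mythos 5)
Your proof is correct, but it follows a genuinely different route from the paper's. You prove the identity by induction on $m$: peel off the single new factor $e^{iy}_{h}-e^{i(x-(m-2)h)}_{h}$ from the $h$-power, write it as an affine combination $\alpha_j\bigl(e^{iy}_{h}-e^{ix_{j}}_{h}\bigr)+\beta_j\bigl(e^{iy}_{h}-e^{ix_{j+m-1}}_{h}\bigr)$ with $\alpha_j+\beta_j=1$, re-index, and recognize the resulting coefficient of $\prod_{l=1}^{m-1}\bigl(e^{iy}_{h}-e^{ix_{j+l}}_{h}\bigr)$ as exactly the right-hand side of the recurrence (\ref{recurrencebsplinee}) multiplied by $e^{ix_{j+m}}_{h}-e^{ix_{j}}_{h}$; your $\alpha_j,\beta_j$ and the final cancellation of the denominators $e^{ix_{j+m-1}}_{h}-e^{ix_{j}}_{h}$ all check out, as does the base case $\sum_j\tilde{E}_{j,1}(x;h)=1$. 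The paper instead avoids the recurrence entirely: it introduces the interpolants $P_j(y)\in\tilde{S}_m$ of the truncated power at $y=x_j,\ldots,x_{j+m-1}$, observes that $P_{j+1}-P_j$ vanishes at the $m-1$ shared nodes and hence is a constant multiple of $w_j(y)$, identifies that constant as a difference of leading coefficients via (\ref{dcoeficient}) — which is precisely $\tilde{E}_{j,m}(x;h)$ — and telescopes, using the choice of $(x_k,x_r)$ only to evaluate the two extreme interpolants. The paper's argument exhibits $\tilde{E}_{j,m}(x;h)$ directly as a dual functional applied to the truncated power and needs no induction; yours is more elementary (pure algebra plus the already-proved two-term recurrence) but obliges you to track the index shift and the supports of the $E_{j,m-1}$ at the boundary of the summation range, which you correctly flag and resolve. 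Both are standard and complete proofs of Marsden-type identities.
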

\begin{proof} 
	Let $w_j(y)=\left(e^{iy}_{h}-e^{ix_{j+1}}_{h}\right)\cdots\left(e^{iy}_{h}-e^{ix_{j+m-1}}_{h}\right)$ à be the product on the right-hand side of (\ref{exp_marsden}) and let $P_j(y) \in \tilde{S}_{m}$ interpolate 	$\left(e^{iy}_{h}-e^{ix}_{h}\right)^{m-1}_{+}$ at $y=x_j,\ldots,x_{j+m-1}$. Since $P_{j+1}(y)$ interpolates 	$\left(e^{iy}_{h}-e^{ix}_{h}\right)^{m-1}_{h}$ at $y=x_{j+1},\ldots,x_{j+m}$, it follows that, for a fixed $x$, the difference $P_j(y)-P_{j+1}(y)$, as a polynomial in $e^{iy}_{h}$, is equal to zero at $y=x_{j+1},\ldots,x_{j+m-1}$. Hence,
	\begin{equation}
		P_{j+1}(y)-P_{j}(y)=c(x)w_j(y).
	\end{equation}
 The coefficient $c(x)$ is the difference of the leading coefficients of $P_{j+1}(y)$ and $P_j(y)$, which by  (\ref{dcoeficient}) and (\ref{hexpbspline}) is equal to
	\begin{align*}
		c(x)=&\left( [ x_{j+1}, \ldots, x_{j+m}; h ]_{e} -[ x_{j}, \ldots, x_{j+m-1}; h ]_{e} \right)\left(e^{iy}_{h}-e^{ix}_{h}\right)^{m-1}_{+}\\=&\left(e^{ix_{j+m}}_{h}-e^{ix_j}_{h}\right)[ x_{j}, \ldots, x_{j+m}; h ]_{e}\left(e^{iy}_{h}-e^{ix}_{h}\right)^{m-1}_{+}\\=&
		\left(e^{ix_{j+m}}_{h}-e^{ix_j}_{h}\right) E_{j,m}(x;h)\\=&
		\tilde{E}_{j,m}(x;h).
	\end{align*}
Hence,
	\begin{equation*}
		P_{j+1}(y)-P_{j}(y)= w_j(y)\tilde{E}_{j,m}(x;h).
	\end{equation*}
	Let $x\in (x_k,x_r)$.	Summing both sides of this equation over $j=k-m+1, \ldots, r-1$, we obtain
	\begin{align}\label{marsdenyardimci}
		\sum_{j}w_j(y)\tilde{E}_{j,m}(x;h)=\sum_{j}\left(		P_{j+1}(y)-P_{j}(y)\right)=P_r(y)-P_{k-m+1}(y).
	\end{align}
By construction,
  $P_{k-m+1}(y)$ interpolates 	$\left(e^{iy}_{h}-e^{ix}_{h}\right)^{m-1}_{+}$ at $y=x_{k-m+1},\ldots,x_{k}$ while  $P_{r}(y)$ interpolates\\	$\left(e^{iy}_{h}-e^{ix}_{h}\right)^{m-1}_{+}$ at $y=x_{r},\ldots,x_{r+m-1}$. 
	Since $x > x_k$, we have  $P_{k-m+1}(y)=\left(e^{iy}_{h}-e^{ix}_{h}\right)^{m-1}_{+}=0$ at $y=x_{k-m+1},\ldots,x_{k}$, so $P_{k-m+1}(y)=0$, $y\in \mathbb{R}$. Also,
	since $x< x_r$, we have $P_{r}(y)=\left(e^{iy}_{h}-e^{ix}_{h}\right)^{m-1}_{+}=\left(e^{iy}_{h}-e^{ix}_{h}\right)^{m-1}_{h}$ at $y=x_{r},\ldots,x_{r+m-1}$, so $P_{r}(y)=\left(e^{iy}_{h}-e^{ix}_{h}\right)^{m-1}_{h}$, $y\in \mathbb{R}$. Therefore
	\begin{align}\label{marsdenyardimci3}
		P_r(y)-P_{k-m+1}(y)=\left(e^{iy}_{h}-e^{ix}_{h}\right)^{m-1}_{h}, \quad y\in \mathbb{R}
	\end{align}
	and (\ref{exp_marsden}) follows by (\ref{marsdenyardimci})-(\ref{marsdenyardimci3}).
\end{proof}

\begin{theorem}[\textbf{Marsden identity, $h$-trigonometric form}]For $x\in (x_k,x_r)$ and $y\in \mathbb{R}$,
	\begin{equation}
		\left(\sin_{h}\frac{y-x}{2}\right)^{m-1}_{h}=\sum_{j}\tilde{T}_{j,m}(x;h)\prod_{l=1}^{m-1}\sin_{h}\left(\frac{y-x_{j+l}}{2}\right).
	\end{equation}
\end{theorem}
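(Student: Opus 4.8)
The plan is to derive the $h$-trigonometric form of Marsden's identity from the $h$-exponential form (the preceding theorem) by applying the conversion relations that link the two worlds, exactly in the spirit of the proofs of Lemma \ref{lemmarelation} and Proposition \ref{relationbsplines}. First I would start from the $h$-exponential Marsden identity
\begin{equation*}
	\left(e^{iy}_{h}-e^{ix}_{h}\right)^{m-1}_{h}=\sum_{j}\tilde{E}_{j,m}(x;h)\prod_{l=1}^{m-1}\left(e^{iy}_{h}-e^{ix_{j+l}}_{h}\right),
\end{equation*}
valid for $x\in(x_k,x_r)$ and all $y\in\R$, and rewrite every $h$-exponential factor on both sides in terms of $\sin_h$ via the Lemma identity (\ref{identitysinexp}), i.e. $e^{iy}_{h}-e^{iz}_{h}=2i\,e^{i(y+z)/2}_{h}\sin_{h}\big((y-z)/2\big)$.

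The key computation is bookkeeping of the accumulated exponential prefactors. On the left-hand side, $\left(e^{iy}_{h}-e^{ix}_{h}\right)^{m-1}_{h}=\prod_{s=0}^{m-2}\left(e^{iy}_{h}-e^{i(x-sh)}_{h}\right)$, and applying (\ref{identitysinexp}) to each factor produces $\left(2i\right)^{m-1}e^{\frac{i}{2}\sum_{s=0}^{m-2}(y+x-sh)}_{h}\left(\sin_h\frac{y-x}{2}\right)^{m-1}_{h}$; the exponent is $\frac{i}{2}\big((m-1)(x+y)-\binom{m-1}{2}h\big)$, which matches the prefactor already appearing inside the proof of Proposition \ref{relationbsplines}. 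On the right-hand side, in each summand the product $\prod_{l=1}^{m-1}\left(e^{iy}_{h}-e^{ix_{j+l}}_{h}\right)$ becomes $(2i)^{m-1}e^{\frac{i}{2}\sum_{l=1}^{m-1}(y+x_{j+l})}_{h}\prod_{l=1}^{m-1}\sin_h\frac{y-x_{j+l}}{2}$. Then I would invoke relation (\ref{tbspline marsden}), which says precisely $\tilde{T}_{j,m}(x;h)=e^{\frac{i}{2}(\sum_{k=1}^{m-1}x_{j+k}+\binom{m-1}{2}h)}_{h}\,\tilde U_m\tilde E_{j,m}(x;h)=e^{\frac{i}{2}(\sum_{k=1}^{m-1}x_{j+k}+\binom{m-1}{2}h)}_{h}\,e^{-i(\frac{m-1}{2})x}_{h}\tilde E_{j,m}(x;h)$, to re-express $\tilde E_{j,m}(x;h)$ in terms of $\tilde T_{j,m}(x;h)$.

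Substituting these into the $h$-exponential identity and using the multiplicativity (\ref{exppower}) of $e^{i\cdot}_{h}$ to collect all the exponential factors, one checks that the common factor $(2i)^{m-1}$ cancels between the two sides, and the $y$-dependent exponential $e^{\frac{i}{2}(m-1)y}_{h}$ appears to the same power on both sides and cancels as well. What remains is the $y$-independent exponential from the left side, $e^{\frac{i}{2}((m-1)x-\binom{m-1}{2}h)}_{h}$, against the factor $e^{\frac{i}{2}(\sum_{l=1}^{m-1}x_{j+l})}_{h}$ from each summand on the right together with the inverse factor hidden in $\tilde E_{j,m}=e^{i(\frac{m-1}{2})x}_{h}e^{-\frac{i}{2}(\sum_{k=1}^{m-1}x_{j+k}+\binom{m-1}{2}h)}_{h}\tilde T_{j,m}$; these are designed to telescope so that every exponential cancels and only $\left(\sin_h\frac{y-x}{2}\right)^{m-1}_{h}=\sum_j\tilde T_{j,m}(x;h)\prod_{l=1}^{m-1}\sin_h\frac{y-x_{j+l}}{2}$ survives. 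The main obstacle is purely the careful accounting of the powers of $2i$, of the $\binom{m-1}{2}h$ shift terms coming from the $h$-power truncation, and of the sign and index ranges in the exponential sums $\sum_{s=0}^{m-2}$ versus $\sum_{l=1}^{m-1}$ versus $\sum_{k=1}^{m-1}$; one must verify these line up exactly so that nothing but the sine product is left. Once that verification is done, the identity for $x\in(x_k,x_r)$ and $y\in\R$ follows immediately.
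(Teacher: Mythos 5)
Your proposal is correct and follows essentially the same route as the paper: the paper's own proof is a one-line derivation from the $h$-exponential Marsden identity (\ref{exp_marsden}) together with (\ref{tbspline marsden}), (\ref{operator2}) and (\ref{identitysinexp}), which is precisely the conversion you carry out, and your bookkeeping of the common prefactor $(2i)^{m-1}e^{\frac{i}{2}\left((m-1)(x+y)-\binom{m-1}{2}h\right)}_{h}$ on both sides is accurate.
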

\begin{proof} This result is easily derived using (\ref{exp_marsden}), (\ref{tbspline marsden}), (\ref{operator2}) and (\ref{identitysinexp}).
\end{proof}

To conclude this section, we embark on
a proof of an integral representation of the $h$-divided difference. To proceed, we
introduce the following notation.\\
\indent
%%%%%%%%%%%%%

Let $L_{m}$ be the difference operator defined by $L_{0}=I$ (the identity operator) and

\begin{equation}\label{E1}
	L_{m}=\prod_{j=0}^{m-1}\left(\Delta_{h}-\frac{\left(e_{h}^{i(j-(m-1) / 2) h}-1\right)}{h} I\right), \quad m \geq1.
\end{equation}
Let $M_{m}$ be the difference operator defined by $M_{0}=I$ and
\begin{equation}\label{E2}
	M_{m}=\prod_{j=0}^{m-1}\left(\Delta_{h}-\frac{\left(e_{h}^{i j h}-1\right)}{h} I\right), \quad m \geq 1.
\end{equation}
From these definitions, (\ref{space1}), (\ref{expspace2}), and (\ref{1derivative_e2}), it is clear that $S_{m} \subseteq \operatorname{Ker}\left(L_{m}\right)$ and $\tilde{S}_{m} \subseteq \operatorname{Ker}\left(M_{m}\right)$.
\begin{lemma}
	Let $U_{m}$ and $\tilde{U}_{m}$ be the multiplication operators defined by (\ref{operator11}) and (\ref{operator2}). Then,
\begin{equation}\label{E3}
	L_{m}=e_{h}^{-i\binom{m}{2} h} \tilde{U}_{m} M_{m} U_{m}, \quad m \geqslant 0.
\end{equation}
\end{lemma}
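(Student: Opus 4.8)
The plan is to prove the operator identity $L_m = e_h^{-i\binom{m}{2}h}\,\tilde U_m M_m U_m$ by induction on $m$, using the factored form of the definitions \eqref{E1} and \eqref{E2}. The base case $m=0$ is trivial since both sides equal $I$ (the empty product), and the scalar $e_h^{-i\binom{0}{2}h}=e_h^{0}=1$.

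For the inductive step, the key observation I would isolate first is a \emph{conjugation rule} for how each linear factor $\Delta_h - cI$ transforms under multiplication by an $h$-exponential. Concretely, writing $V_c$ for multiplication by $e_h^{icx}$, I would show that
\begin{equation*}
	V_c^{-1}\bigl(\Delta_h - \lambda I\bigr)V_c = e_h^{-ich}\,\Delta_h - \Bigl(\lambda - \tfrac{e_h^{-ich}(e_h^{ich}-1)}{h}\Bigr)I,
\end{equation*}
which is really just the statement \eqref{1derivative_e2} that $\Delta_h(e_h^{icx}g(x)) = e_h^{icx}\bigl(e_h^{ich}\Delta_h g + \tfrac{e_h^{ich}-1}{h}g\bigr)$ rearranged; equivalently $\Delta_h V_c = V_c\bigl(e_h^{ich}\Delta_h + \tfrac{e_h^{ich}-1}{h}I\bigr)$. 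Since $U_m$ is multiplication by $e_h^{i((m-1)/2)x}$ and $\tilde U_m = U_m^{-1}$, this lets me push $U_m$ through the product $M_m = \prod_{j=0}^{m-1}\bigl(\Delta_h - \tfrac{e_h^{ijh}-1}{h}I\bigr)$ one factor at a time: each factor $\Delta_h - \tfrac{e_h^{ijh}-1}{h}I$ conjugated by $U_m$ produces $e_h^{i((m-1)/2)h}\Delta_h$ minus a scalar, and after dividing through by the scalar $e_h^{i((m-1)/2)h}$ — collecting one such factor per $j$, hence $e_h^{i((m-1)/2)\cdot m\, h}$ in total, i.e. $e_h^{i\binom{m}{2}h}$ after accounting for the index shift — the shifted index $j \mapsto j - (m-1)/2$ appears in the $h$-exponential and the product becomes exactly $L_m$. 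So in fact a direct (non-inductive) computation works: conjugate $M_m$ by $U_m$ factor-by-factor using the conjugation rule, track the accumulated scalar, and match against \eqref{E1}.

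I would therefore organize the write-up as: (i) state and verify the conjugation identity $\tilde U_m \Delta_h U_m = e_h^{i((m-1)/2)h}\Delta_h + \tfrac{e_h^{i((m-1)/2)h}-1}{h}I$ as an operator identity on the relevant function space, directly from \eqref{1derivative_e2} with $c = (m-1)/2$; (ii) observe that since $\tilde U_m U_m = I$, conjugation distributes over products, $\tilde U_m M_m U_m = \prod_{j=0}^{m-1}\tilde U_m\bigl(\Delta_h - \tfrac{e_h^{ijh}-1}{h}I\bigr)U_m$; (iii) substitute the conjugation identity into each factor and simplify the $j$-th factor to $e_h^{i((m-1)/2)h}\bigl(\Delta_h - \tfrac{e_h^{i(j-(m-1)/2)h}-1}{h}I\bigr)$, which requires only the algebraic identity $e_h^{i((m-1)/2)h}\cdot\tfrac{e_h^{i(j-(m-1)/2)h}-1}{h} = \tfrac{e_h^{ijh}-1}{h} - \tfrac{e_h^{i((m-1)/2)h}-1}{h}$, valid because $e_h^{i((m-1)/2)h}e_h^{i(j-(m-1)/2)h}=e_h^{ijh}$ by \eqref{exppower}; (iv) pull the $m$ copies of the scalar $e_h^{i((m-1)/2)h}$ out of the product to get $e_h^{im(m-1)h/2} = e_h^{i\binom{m}{2}h}$ times $\prod_{j=0}^{m-1}\bigl(\Delta_h - \tfrac{e_h^{i(j-(m-1)/2)h}-1}{h}I\bigr) = L_m$, and divide.

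The main obstacle I anticipate is purely bookkeeping: keeping the sign on the scalar prefactor straight (the identity has $e_h^{-i\binom{m}{2}h}$ on the $L_m$ side, so after conjugating $M_m$ one gets $\tilde U_m M_m U_m = e_h^{i\binom{m}{2}h}L_m$, equivalently $L_m = e_h^{-i\binom{m}{2}h}\tilde U_m M_m U_m$), and verifying that the constant terms recombine correctly across all $m$ factors — in particular that the "extra" constants $-\tfrac{e_h^{i((m-1)/2)h}-1}{h}I$ produced by each conjugated factor are exactly what is needed to shift the exponent $ijh$ down to $i(j-(m-1)/2)h$, with no residual cross terms. This works cleanly because each conjugated factor is again affine in $\Delta_h$, so the product of conjugated factors equals the conjugate of the product with no reordering issues; there is no genuine analytic content beyond \eqref{1derivative_e2} and \eqref{exppower}.
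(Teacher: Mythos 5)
Your proposal is correct and takes essentially the same route as the paper: both establish the conjugation identity $\tilde U_m \Delta_h U_m = e_h^{i\left(\frac{m-1}{2}\right)h}\Delta_h + \frac{e_h^{i\left(\frac{m-1}{2}\right)h}-1}{h}I$ from the product rule, conjugate $M_m$ factor by factor, pull out one scalar $e_h^{i\left(\frac{m-1}{2}\right)h}$ per factor to accumulate $e_h^{i\binom{m}{2}h}$, and identify the shifted constant terms with the factors of $L_m$. (Only your very first displayed conjugation rule has a sign slip, $e_h^{-ich}$ where it should be $e_h^{ich}$; the corrected version you state immediately afterward and actually use in steps (i)--(iv) is right.)
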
 
\begin{proof}
From (\ref{derivpr}) and (\ref{1derivative_e2}), it follows that
\begin{align}\label{E4}
	\Delta_{h}\left(e_{h}^{i c x} f(x)\right) =f(x) \Delta_{h}\left(e_{h}^{i c x}\right)+e_{h}^{i c(x+h)} \Delta_{h} f(x) 
	 =e_{h}^{i c x}\left(\frac{\left(e_{h}^{i c h}-1\right)}{h} f(x)+e_{h}^{i c h} \Delta_{h} f(x)\right).
\end{align}
Let $\lambda_{c}$ denote the multiplication operator $\lambda_{c} f(x)=e_{h}^{i c x} f(x)$. Then $\lambda_{c}^{-1}=\lambda_{-c}$ and the operator relation (\ref{E4}) becomes
\begin{equation}\label{E5}
	\lambda_{c}^{-1} \Delta_{h} \lambda_{c}=e_{h}^{i c h} \Delta_{h}+\frac{\left(e_{h}^{i c h}-1\right)}{h} I. 
\end{equation}
Note also that $U_{m}=\lambda_{(m-1) / 2}$ and $\tilde{U}_{m}=\lambda_{-(m-1) / 2}$. Let $M=\prod \limits_{j=0}^{N}\left(\Delta_{h}+\alpha_{j} I\right)$. By (\ref{E5}) we have
\begin{align}\nonumber
	\lambda_{c}^{-1}M \lambda_{c} & =\prod_{j=0}^{N}\left(\lambda_{c}^{-1}\left(\Delta_{h}+\alpha_{j} I\right) \lambda_{c}\right)=\prod_{j=0}^{N}\left(e_{h}^{i c h} \Delta_{h}+\frac{\left(e_{h}^{i c h}-1+h \alpha_{j}\right)}{h} I\right) \\ \label{E6}
	& =e_{h}^{i(N+1) c h} \prod_{j=0}^{N}\left(\Delta_{h}+\frac{\left(1+\left(h \alpha_{j}-1\right) e_{h}^{-i c h}\right)}{h} I\right) .
\end{align}
Choosing $N=m-1$, $c=(m-1)/2$, $\alpha_{j}=(1-e_{h}^{i j h})/h$,\, $j=0,1, \ldots,m-1$ in (\ref{E6}) and multiplying both sides of
(\ref{E6}) by  $e_{h}^{-i\binom{m}{2} h}$, we obtain using (\ref{E1}) and (\ref{E2}) the operator relation (\ref{E3}). 

\end{proof}

%%%%%%%%%%%%%5
\begin{lemma}
	\begin{align}\label{relopintegral}
		e_{h}^{-i(m-1) y} M_m f(y-(m-1)h)=e_{h}^{i(m-1) h}\Delta_{h}(e_{h}^{-i(m-1) y} M_{m-1} f(y-(m-1)h)).
	\end{align}
\end{lemma}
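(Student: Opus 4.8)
The plan is to peel off the outermost factor in the product defining $M_m$ and then compare the two sides of (\ref{relopintegral}) after writing both out with the definition of $\Delta_h$; the only nontrivial input is the $h$-exponential law $e_h^{iA}e_h^{iB}=e_h^{i(A+B)}$, a special case of (\ref{exppower}).

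First I would observe that the $m$ factors in (\ref{E2}) are polynomials in the single operator $\Delta_h$, hence commute, so
\[
 M_m=\left(\Delta_h-\frac{e_h^{i(m-1)h}-1}{h}\,I\right)M_{m-1}.
\]
Abbreviating $g=M_{m-1}f$, the definition of $\Delta_h$ gives $M_m f(z)=h^{-1}\bigl(g(z+h)-e_h^{i(m-1)h}g(z)\bigr)$. Evaluating at $z=y-(m-1)h$ and multiplying by $e_h^{-i(m-1)y}$, the left-hand side of (\ref{relopintegral}) becomes the explicit expression
\[
 \frac{e_h^{-i(m-1)y}\,g(y-(m-2)h)-e_h^{-i(m-1)y}e_h^{i(m-1)h}\,g(y-(m-1)h)}{h}.
\]

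Next I would expand the right-hand side. Setting $\psi(y)=e_h^{-i(m-1)y}g(y-(m-1)h)$, the definition of $\Delta_h$ gives
\[
 \Delta_h\psi(y)=\frac{e_h^{-i(m-1)(y+h)}\,g(y-(m-2)h)-e_h^{-i(m-1)y}\,g(y-(m-1)h)}{h},
\]
and multiplying by $e_h^{i(m-1)h}$ and using $e_h^{i(m-1)h}e_h^{-i(m-1)(y+h)}=e_h^{-i(m-1)y}$ — the single place the exponential law is invoked — produces exactly the displayed left-hand side. Comparing the two expressions finishes the proof.

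I do not expect a genuine obstacle; the computation is only a few lines. The one point requiring care is the bookkeeping of the shifted arguments: the outer $\Delta_h$ turns $g$ evaluated at $y-(m-1)h$ into the pair of values at $y-(m-2)h$ and $y-(m-1)h$, and these must be matched term by term against the two values produced by $\Delta_h\psi$ once the $e_h$-prefactors are simplified. Keeping $g=M_{m-1}f$ as a fixed abbreviation throughout and recording the identity $e_h^{i(m-1)h}e_h^{-i(m-1)(y+h)}=e_h^{-i(m-1)y}$ explicitly makes this matching transparent.
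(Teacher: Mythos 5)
Your proof is correct and follows essentially the same route as the paper: both peel off the outermost factor $\Delta_h-\frac{e_h^{i(m-1)h}-1}{h}I$ of $M_m$ and verify the conjugation identity for multiplication by $e_h^{-i(m-1)y}$, the paper packaging the computation via the $h$-product rule (\ref{derivpr}) while you expand $\Delta_h$ from its definition. The bookkeeping of the shifted arguments and the use of $e_h^{i(m-1)h}e_h^{-i(m-1)(y+h)}=e_h^{-i(m-1)y}$ are handled correctly.
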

\begin{proof}
	We can verify (\ref{relopintegral}) as follows. By the $h$-product rule (\ref{derivpr}) with $f(x)=e^{-icx}_h$, $c=$constant,
	\begin{align*}
		\Delta_{h}(e^{-icx}_hg(x))&=g(x)(e^{-ic(x+h)}_h-e^{-icx}_h)/h  +e^{-ic(x+h)}_h \Delta_{h}g(x)\\&=e^{-ic(x+h)}_h\left(\Delta_{h}-(e^{ich}_h-1)/h\right)g(x).
	\end{align*}
	Choosing $c=m-1$, $g(x)=M_{m-1} f(x-(m-1)h)$, setting $x=y$, multiplying both sides by $e^{i(m-1)h}_h$, and using  definition  (\ref{E2}) of $M_m$ yields (\ref{relopintegral}).
\end{proof}

\begin{theorem}[\textbf{Integral representation for the $h$-exponential divided differences}] Let $(x_j-x_0)/h \in\mathbb{N}$, \, $j=1,2,\ldots m$. Then
	\begin{align}\label{intrep}
		\left[x_{0}, \ldots, x_{m};h\right]_e f=\frac{1}{A^{m-1}_h}\int_{x_{0}}^{x_{m}} e^{-i(m-1) y}_h E_{0, m}(y;h) M_{m} f(y-(m-1)h) d_h y, \quad m\geq 1,
	\end{align}
	where
	\begin{align}\label{Akh}
		A^{k}_h=	\left\{\begin{array}{ll}		1,	& \mbox{if}\quad k=0\\
			\left(e_{h}^{i h}/h\right)^{k}\left(1-e_{h}^{-i h}\right)\left(1-e_{h}^{-2i h}\right)\cdots\left(1-e_{h}^{-ki h}\right),	& \mbox{if}\quad k\neq 0.
		\end{array}\right.
	\end{align}
\end{theorem}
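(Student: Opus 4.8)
The plan is to prove (\ref{intrep}) by induction on $m$. The base case $m=1$ is immediate: then $A^{0}_h=1$, the prefactor $e^{-i(m-1)y}_h$ equals $1$, $M_1=\Delta_h$ (since $e_h^{i\cdot 0\cdot h}-1=0$), and $E_{0,1}(y;h)=1/(e^{ix_1}_h-e^{ix_0}_h)$ on $[x_0,x_1)$, so the right-hand side of (\ref{intrep}) collapses to $\frac{1}{e^{ix_1}_h-e^{ix_0}_h}\int_{x_0}^{x_1}\Delta_h f(y)\,d_hy=\frac{f(x_1)-f(x_0)}{e^{ix_1}_h-e^{ix_0}_h}=[x_0,x_1;h]_{e}f$ by the fundamental theorem of $h$-calculus and (\ref{recursion}).

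For the inductive step ($m\geq2$), I would first rewrite the integrand using the operator identity (\ref{relopintegral}): setting $G(y):=e^{-i(m-1)y}_hM_{m-1}f(y-(m-1)h)$, the integrand equals $e^{i(m-1)h}_hE_{0,m}(y;h)\,\Delta_hG(y)$. Applying $h$-integration by parts (\ref{hintpart}), the boundary terms vanish because $E_{0,m}(x_0;h)=E_{0,m}(x_m;h)=0$ for $m\geq2$: at either endpoint the truncated power $\left(e^{iy}_{h}-e^{ix}_{h}\right)^{m-1}_{+}$, evaluated on the nodes $x_0,\ldots,x_m$, agrees with a degree-$(m-1)$ polynomial in $e^{iy}_h$, hence an element of $\tilde{S}_m$, which is annihilated by the order-$m$ divided difference. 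The right-hand side of (\ref{intrep}) then becomes $-\frac{e^{i(m-1)h}_h}{A^{m-1}_h}\int_{x_0}^{x_m}G(y+h)\,\Delta_hE_{0,m}(y;h)\,d_hy$, into which I substitute the derivative formula (\ref{derivativeE}). Using $G(y+h)\,e^{i(y+h)}_h=e^{-i(m-2)h}_h\,e^{-i(m-2)y}_h\,M_{m-1}f(y-(m-2)h)$ and the telescoping relation $A^{m-1}_h=A^{m-2}_h\,(e^{ih}_h/h)\,(1-e^{-(m-1)ih}_h)$ that follows from (\ref{Akh}), every scalar $e^{\pm ikh}_h$ factor cancels and the constant in front of the integral reduces exactly to $\frac{1}{A^{m-2}_h(e^{ix_m}_h-e^{ix_0}_h)}$. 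Splitting the integral according to the supports of $E_{1,m-1}$ (contained in $[x_1,x_m]$) and $E_{0,m-1}$ (contained in $[x_0,x_{m-1}]$), and applying the inductive hypothesis to the node sets $x_1,\ldots,x_m$ and $x_0,\ldots,x_{m-1}$, yields $\frac{[x_1,\ldots,x_m;h]_{e}f-[x_0,\ldots,x_{m-1};h]_{e}f}{e^{ix_m}_h-e^{ix_0}_h}$, which is $[x_0,\ldots,x_m;h]_{e}f$ by the recurrence (\ref{recursion}).

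The main obstacle is the bookkeeping of $h$-exponential shift factors: those produced by (\ref{relopintegral}), by the $g(x+h)$ term in $h$-integration by parts, and by the factor $e^{i(x+h)}_h$ in (\ref{derivativeE}) must be shown to combine with the ratio $A^{m-1}_h/A^{m-2}_h$ into precisely the factor $(e^{ix_m}_h-e^{ix_0}_h)^{-1}$ required by the divided-difference recurrence; this is exactly where the specific form (\ref{Akh}) of $A^{k}_h$ is used. A minor technical point is that these manipulations implicitly require $f$ to be defined on $[x_0-(m-1)h,\,x_m]$, which is compatible with the standing hypothesis $(x_j-x_0)/h\in\mathbb{N}$.
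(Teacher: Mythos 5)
Your proposal is correct and follows essentially the same route as the paper: induction on $m$, rewriting the integrand via (\ref{relopintegral}), $h$-integration by parts with the boundary terms killed by $E_{0,m}(x_0;h)=E_{0,m}(x_m;h)=0$, substitution of the derivative formula (\ref{derivativeE}), the telescoping identity $e^{ih}_h(1-e^{-i(m-1)h}_h)A^{m-2}_h/h=A^{m-1}_h$, and the recursion (\ref{recursion}). The only cosmetic difference is that you step from $m-1$ to $m$ while the paper steps from $m$ to $m+1$, and you spell out the vanishing of the boundary terms in slightly more detail.
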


\begin{proof}
We use induction on $m$. First we  show that (\ref{intrep}) is true for $m=1$. In this case
\begin{align*}
	\frac{1}{A^{0}_h}\int_{x_{0}}^{x_{1}} E_{0, 1}(y;h) M_{1} f(y) d_h y&=\int_{x_{0}}^{x_{1}} \frac{1}{e_{h}^{ix_1}-e_{h}^{ix_0}}\Delta_h f(y) d_h y=\frac{1}{e_{h}^{ix_1}-e_{h}^{ix_0}}\int_{x_{0}}^{x_{1}}\Delta_h f(y) d_h y\\&=\frac{1}{e_{h}^{ix_1}-e_{h}^{ix_0}}\left(f(x_1)-f(x_0)\right)
	=\left[x_{0}, x_{1};h\right]_e f.
\end{align*}
Assume that (\ref{intrep})  is true for some $m\geq 1$. We will show that (\ref{intrep}) is true for $m + 1$. By (\ref{relopintegral}) the right-hand side of (\ref{intrep}) for $m+1$ multiplied by $A^{m}_h$ is equal to
\begin{align*}
	\int_{x_{0}}^{x_{m+1}} e^{-i m y}_h E_{0, m+1}(y;h) M_{m+1} f(y-m h) d_h y=& \int_{x_{0}}^{x_{m+1}}E_{0, m+1}(y;h)e_{h}^{im h}\Delta_{h}(e_{h}^{-im y} M_{m} f(y-mh)) d_h y\\=&-\int_{x_{0}}^{x_{m+1}}e_{h}^{-im y} M_{m} f(y-(m-1)h)\Delta_{h}E_{0, m+1}(y;h) d_h y,
\end{align*}
after applying the $h$-integration by parts formula (\ref{hintpart}) and using $E_{0,m+1}(y;h)=0$ at $y=x_0,x_{m+1}$, which follows from (\ref{hexpbspline}).
Using the recurrence relation (\ref{derivativeE}) for $\Delta E_{0, m+1}$ yields
\begin{align*}
	& \int_{x_{0}}^{x_{m+1}} e^{-i m y}_h E_{0, m+1}(y;h) M_{m+1} f(y-m h) d_h y\\
	&=-\int_{x_{0}}^{x_{m+1}}e_{h}^{-im y} M_{m} f(y-(m-1)h)    \frac{ \left(e^{-imh}_{h}-1\right)e^{i(y+h)}_{h} }{\left(e^{ix_{m+1}}_{h}-e^{ix_{0}}_{h}\right)h} \left(E_{1,m}(y;h)-	E_{0,m}(y;h) \right) d_h y\\&=		\frac{e^{ih}_h(1-e^{-imh}_h)}{(e^{ix_{m+1}}_{h}-e^{ix_{0}}_{h})h}\Bigg(\int_{x_{1}}^{x_{m+1}}e_{h}^{-i(m-1) y} M_{m} f(y-(m-1)h) E_{1,m}(y;h) d_h y\\&- \int_{x_{0}}^{x_{m}}e_{h}^{-i(m-1) y} M_{m} f(y-(m-1)h) E_{0,m}(y;h) d_h y\Bigg)\\&=\frac{A^{m}_h}{e^{ix_{m+1}}_{h}-e^{ix_{0}}_{h}}\Big(	\left[x_{1}, \ldots, x_{m+1} ; h\right]_{e} f-	\left[x_{0}, \ldots, x_{m} ; h\right]_{e} f\Big),
\end{align*}
where at the end we used the induction assumption and the relation $e^{ih}_h(1-e^{-imh}_h)A^{m-1}_h/h=A^{m}_h$. The result follows from the recurrence relation (\ref{recursion}) for the $h$-exponential divided difference. 	
\end{proof}
\begin{theorem}[\textbf{Integral representation for the $h$-trigonometric divided differences}]
	\begin{align}
		\left[x_{0}, \ldots, x_{m} ; h\right]_{t} f=(2 i)^{m-1} \frac{e_{h}^{-i (m-1)(m-4) h/4}}{A_{h}^{m-1}} \int_{x_0}^{x_{m}} T_{0, m}(y;h) L_{m} f(y-(m-1) h) d_{h} y,
	\end{align}
	where $A_{h}^{m-1}$ is given by (\ref{Akh}).
\end{theorem}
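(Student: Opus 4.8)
The strategy is to transport the integral representation just established for the $h$-exponential divided difference across the three dictionaries between the exponential and trigonometric settings: Lemma~\ref{lemmarelation}, relating $[x_{0},\ldots,x_{m};h]_{t}f$ to $[x_{0},\ldots,x_{m};h]_{e}(U_{m}f)$; Proposition~\ref{relationbsplines}, relating $T_{j,m}$ to $E_{j,m}$; and the operator identity (\ref{E3}), relating $L_{m}$ to $M_{m}$. First I would invoke Lemma~\ref{lemmarelation} to write
\[
[x_{0},\ldots,x_{m};h]_{t}f=c_{0,m}\,[x_{0},\ldots,x_{m};h]_{e}(U_{m}f),\qquad c_{0,m}=(2i)^{m}e_{h}^{\frac{i}{2}\sum_{k=0}^{m}x_{k}},
\]
and then apply the $h$-exponential integral representation (\ref{intrep}), with $f$ replaced by $U_{m}f$ (its hypothesis $(x_{j}-x_{0})/h\in\N$ being inherited here). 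This expresses $[x_{0},\ldots,x_{m};h]_{t}f$ as $c_{0,m}/A_{h}^{m-1}$ times $\int_{x_{0}}^{x_{m}}e_{h}^{-i(m-1)y}E_{0,m}(y;h)\,M_{m}(U_{m}f)(y-(m-1)h)\,d_{h}y$.

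Next I would process the integrand. From (\ref{E3}) together with $\tilde{U}_{m}U_{m}=I$ one gets $M_{m}(U_{m}f)(x)=e_{h}^{i\binom{m}{2}h}e_{h}^{i(m-1)x/2}(L_{m}f)(x)$; evaluating this at $x=y-(m-1)h$ and combining with the $e_{h}^{-i(m-1)y}$ already present collapses the $y$-dependent exponential to $e_{h}^{-i(m-1)y/2}$ and leaves a global scalar $e_{h}^{i\binom{m}{2}h-i(m-1)^{2}h/2}$. Then, using Proposition~\ref{relationbsplines} with $j=0$ in the variable $y$, namely $e_{h}^{-i(m-1)y/2}E_{0,m}(y;h)=\tfrac{1}{2i}e_{h}^{-\frac{i}{2}(\sum_{k=0}^{m}x_{k}+\binom{m-1}{2}h)}T_{0,m}(y;h)$, I would replace the exponential B-spline by the trigonometric B-spline. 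After these substitutions the integral becomes exactly $\int_{x_{0}}^{x_{m}}T_{0,m}(y;h)\,L_{m}f(y-(m-1)h)\,d_{h}y$, multiplied by a product of scalar $h$-exponential prefactors.

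The final step is to collect those prefactors. Multiplying the three of them, namely $c_{0,m}=(2i)^{m}e_{h}^{\frac{i}{2}\sum_{k}x_{k}}$, the factor $e_{h}^{i\binom{m}{2}h-i(m-1)^{2}h/2}$ from the $M_{m}U_{m}$ conversion, and $\tfrac{1}{2i}e_{h}^{-\frac{i}{2}(\sum_{k}x_{k}+\binom{m-1}{2}h)}$ from Proposition~\ref{relationbsplines}, one finds that the factors $e_{h}^{\pm\frac{i}{2}\sum_{k}x_{k}}$ cancel, that $(2i)^{m}/(2i)=(2i)^{m-1}$, and that the surviving power of $e_{h}$ carries the $h$-exponent coefficient $\binom{m}{2}-\tfrac{(m-1)^{2}}{2}-\tfrac{1}{2}\binom{m-1}{2}$ (times $ih$). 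Pulling $(m-1)$ out of this coefficient shows it equals $-\tfrac{(m-1)(m-4)}{4}$, so together with the untouched $1/A_{h}^{m-1}$ from (\ref{intrep}) one obtains the factor $e_{h}^{-i(m-1)(m-4)h/4}$, and hence precisely the asserted identity.

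I expect the only genuine difficulty to be bookkeeping in the middle step: the operator identity (\ref{E3}) is an identity of functions of a single variable, and one must apply the shift $x\mapsto y-(m-1)h$ to it consistently before placing it under the $h$-integral, while tracking the three sources of $h$-exponential prefactors (from $c_{0,m}$, from the $M_{m}U_{m}$ conversion, and from Proposition~\ref{relationbsplines}) without sign slips, so that the binomial arithmetic $\binom{m}{2}-\tfrac{(m-1)^{2}}{2}-\tfrac{1}{2}\binom{m-1}{2}=-\tfrac{(m-1)(m-4)}{4}$ comes out correctly. Beyond Lemma~\ref{lemmarelation}, Proposition~\ref{relationbsplines}, the operator identity (\ref{E3}), and the $h$-exponential integral representation (\ref{intrep}), no new estimates or constructions are needed.
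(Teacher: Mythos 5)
Your proposal is correct and follows exactly the route the paper itself takes: the paper's proof is a one-line citation of (\ref{intrep}), (\ref{eq.relationbsplines}), (\ref{relation_te}) and (\ref{E3}), which are precisely the four ingredients you combine, and your bookkeeping of the prefactors (including the binomial identity yielding $-\tfrac{(m-1)(m-4)}{4}$) checks out.
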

\begin{proof}
	This result is easily derived using (\ref{intrep}), (\ref{eq.relationbsplines}), (\ref{relation_te}) and (\ref{E3}).
\end{proof}

\section{Conclusions and Future Work}\label{section6}
We have shown that formulas and identities for classical exponential and classical trigonometric functions readily extend to $h$-exponential and $h$-trigonometric functions. Based on these extensions, we showed that many of the fundamental results for trigonometric B-splines, including the two-term recurrence relation, the two-term differentiation formula, and the Marsden identity, also readily extend to $h$-trigonometric B-splines. The corresponding formulas for classical trigonometric B-splines are actually just limiting cases of these formulas for $h$-trigonometric B-splines. In the future we hope to deepen our understanding of $h$-trigonometric
B-splines by developing a theory of $h$-trigonometric blossoming, as well as studying  knot insertion and
shape preserving properties for the $h$-variants of trigonometric B-splines.

The $q$-calculus is another variant of the quantum calculus. Can the theory of trigonometric B-splines be extended to a theory of $q$-trigonometric B-splines? This is yet another intriguing open problem for
future research.

\section*{Acknowledgments}

The first author  was supported by a grant (B\.{I}DEB-2219) from T\"{U}B\.{I}TAK, Scientific and Technological Research Council of Turkiye.
%%%%%%%%%%%%%%%%%%%%%%%%%%%%%%%%%%%%%%%%%%%%%%%%%%%%%%%%%

%% If you have bibdatabase file and want bibtex to generate the
%% bibitems, please use
%%%
%  \bibliographystyle{elsarticle-num} 
% \bibliography{bibfilefz}

%% else use the following coding to input the bibitems directly in the
%% TeX file.

\end{document}